\author{Mathieu Vienney}
\address{Universit\'e de Lyon \\
UMPA ENS Lyon \\
46 all\'ee d'Italie \\
69007 Lyon \\
France}
\email{mathieu.vienney@umpa.ens-lyon.fr}
\subjclass{11F33, 11F67, 11F30}
\keywords{$p$-adic $L$-functions, Rankin convolution, special values,
  modular forms}
\newcommand{\Qp}{\mathbb{Q}_p}
\newcommand{\Cp}{\mathbb{C}_p}
\newcommand{\Zp}{\mathbb{Z}_p}
\title[Construction of p-adic Rankin convolutions : the case of positive slope]{A new construction of p-adic Rankin convolutions in the case of
  positive slope}
\newcommand*{\ensqo}[2]%
{\ensuremath{%
    #1/\!\raisebox{-.65ex}{\ensuremath{#2}}}}
\begin{document}

\maketitle

\begin{abstract}

Given two newforms $f$ and $g$ of respective weights $k$ and
$l$ with $k<l$, Hida constructed a $p$-adic $L$-function interpolating the values of
the Rankin convolution of $f$ and $g$ in the critical strip $l \leq s \leq
k$. However, this construction works only if $f$ is an ordinary form. Using
a method developed by Panchishkin to construct $p$-adic $L$-function
associated with modular forms, we generalize this construction to the case
where the slope of $f$ is small.

\end{abstract}

\tableofcontents

\section{Introduction}

In this article, we consider $p$ a prime number and $N$ an integer prime to
$p$. Given two primitive modular forms $f=\sum_{n=1}^\infty a_n q^n$ of
weight $k \geq 2$ for the congruence subgroup $\Gamma_0(N)$, with Dirichlet
character $\psi$, and $g=\sum_{n=1}^\infty b_n q^n$ of weight $l<k$ and
character $\omega$, we can define their Rankin convolution by
$$D_N(s,f,g) = L_N(2s+2-k-l,\psi\omega) L(s,f,g)$$
where
$$L(s,f,g) = \sum_{n=1}^\infty a_n b_n n^{-s}$$

It is a complex function whose analytic properties where studied by Rankin
in the 1930's. Later, the arithmetic properties of its special values were
studied by Shimura in \cite{shimura_special}, proving algebraicity results.

The goal of this article is to construct a $p$-adic function interpolating
the values taken by the Rankin convolution in the critical strip $l \leq s
\leq k-1$.

Several such functions have already been constructed, especially by Hida
(\cite{hida_rankin}) in the case where $a_p$ is a $p$-adic unit (the ordinary
case) or by Vinh Quang My (\cite{my}) in the case of Hilbert modular forms.
Here, we give another construction extending the ordinary case, which we
call the case of positive slope.
Let explain quickly this appellation~: in what follows, we will consider
the p-th Hecke polynomial of $f$ : $X^2 - a_p X + \psi(p) p^{k-1}$ which
we factor in $(X-\alpha)(X-\alpha')$, with $v_p(\alpha) \leq
v_p(\alpha')$. The theory of Newton's polygon implies that the $p$-adic
valuation of $\alpha$ is the smallest slope of the Newton polygon of the
polynomial. In the ordinary case, since $a_p$ is an unit, this slope is
zero. Our construction works even in the case when this slope is non-zero
(but only when $2([v_p(\alpha)]+1) \leq k-l$).

Hida's construction is quite complicated, and cannot be generalized to the
case of positive slope. Here we will ameliorate this method using a general
method developed by Alexei Panchishkin in \cite{pant-new} to construct
$p$-adic $L$ functions associated to modular forms.

This method (explained in details in the second part of this article) has
several steps : 
\begin{enumerate}[1)]
\item the construction of a family of distributions with values in some
  spaces of modular forms (in our case nearly holomorphic modular forms)
\item the application of a projector $\pi_{\alpha}$ onto  a subspace of
  finite dimension, which is the characteristic subspace of the
  Atkin-Lehner operator $U_p$, associated with an eigenvalue $\alpha$.
\item the construction of an admissible measure ``glueing'' the different
  distributions
\item the application of a well-chosen linear form in order to obtain
  scalar-valued distributions.
\end{enumerate}

Using this method, the goal of the third part of this article will be to
prove the following theorem :
\begin{enonce*}{Theorem A} \label{thm_a}
Let $f$, $g$ and $\alpha$ be defined as previously, and $b$ prime to
$Np$.\\ If ${2([v_p(\alpha)]+1)\leq k-l}$, then there exists an unique
$(k-l)$-admissible measure $\tilde{Phi}^{\alpha}$ over $Y = \varprojlim Y_\nu = \varprojlim
(\mathbb{Z}/Np^\nu \mathbb{Z})^\times$ such that for all $\nu \geq 1$ and all
$r \in \{0,1, \dots, k-l-1\}$ : 
$$\int_{a + (Np^\nu)} y_p^r d \tilde{\Phi}^\alpha =(U^\alpha)^{-2\nu}
\pi_{\alpha} \left( U^{2\nu} \sum_{y \in Y_\nu} \psi \bar{\omega}(y) g((y^2
  a)_\nu) E^b_{r,k-l}((y)_\nu) \right)$$
where $g((a)_\nu)$ and $E^b_{r,k-l}((y)_\nu)$ are distributions that will be
defined in the second part.
\end{enonce*}

The hardest point in the proof of this theorem is the verification of two
conditions (especially the one called the divisibility condition) imposed
by Panchishkin's method. We will use techniques developed by Bertrand
Gorsse in his PhD thesis (\cite{gorsse}). \\

In the last chapter, we will compute the values of the Mellin transform of
this measure and link them to the special values of Rankin
convolution. More precisely : 
\begin{enonce*}{Theorem B} \label{calcul_integrale}
Under previous hypothesis over $f,g$ and $\alpha$, for all $\chi
\in \mathbf{Hom_{cont}}(Y,\Cp^\times)^{\text{tors}}$ and $r \in \{0,1,\dots,k-l-1 \}$, 
\begin{multline*}
(-1)^{k+r} l_{f,\alpha} \left(\int_Y \chi(y) y_p^r d\tilde{\Phi}^\alpha(y)\right) = \alpha^{-2\nu}
(1-b^{k-l-2r} \psi \bar{\chi}(b)) \frac{\mathcal{D}_{Np^{2\nu+1}}(l+r,f_0^\rho,g(\chi) \mid_l
  W_{Np^{2\nu+1}})}{\left<f^0,f_0\right>_{Np}} \\
\times \pi^{-l-2r-1} 2^{1-k-l-2r} N^{1-(k-l-2r)/2}p^{(2\nu+1)(1-(k-l-2r)/2)} (-1)^r i^{k-l-1}
\Gamma(l+r) \Gamma(r+1)
\end{multline*}
where $f_0$ is the eigenfunction of $U$ associated with $f$, $f^0 = f_0^\rho
\mid_k W_{Np}$ and $W_{Np^\nu} = \begin{pmatrix} 0 & -1 \\ Np^\nu & 0
\end{pmatrix}$
\end{enonce*}

\section{Notations and generalities}

In this article, we denote by $p$ a fixed prime number, and $\Qp$ the field
of $p$-adic numbers, equipped with its normalized p-adic norm (\emph{i.e.}
$|p|_p = p^{-1}$). We denote by $\Zp$ the ring of $p$-adic integers and the Tate field (the completion of an algebraic closure of $\Qp$)
by $\Cp$, and recall that it is also algebraically closed.

In what follows, it will be useful to consider algebraic numbers as
$p$-adic numbers, so we set once and for all an embedding $i_p :
\bar{\mathbb{Q}} \hookrightarrow \Cp$, and if there is no risk of
confusion write $x$ instead of $i_p(x)$.

\subsection{Classical modular forms}

Here, we just specify the notations we use, for more informations about
holomorphic modular forms, the reader can refer to \cite{shimura} or \cite{miyake}.

As usual, we define the congruence subgroups of $SL_2(\mathbb{Z})$ as
follows : 
\begin{eqnarray*}
\Gamma_0(N)&  =& \left\{ \begin{pmatrix} a & b \\ c & d \end{pmatrix} \in
  SL_2(\mathbb{Z}) | c \equiv 0 \mbox{ mod } N \right\} \\
\Gamma_1(N) & = & \left\{\begin{pmatrix} a & b \\ c & d \end{pmatrix} \in
  SL_2(\mathbb{Z}) | c \equiv 0, a\equiv d \equiv 1 \mbox{ mod } N \right\}
\end{eqnarray*}

For $k$ a positive integer, we define an action of these groups over
functions defined on Poincaré half-plane
$H~=~\left\{z~\in~\mathbb{C}~|~\mathfrak{Im}(z)~>~0~\right\}$ by 
$$f|_k \begin{pmatrix} a & b \\ c & d \end{pmatrix} =(cz+d)^{-k}
f\left(\frac{az+b}{cz+d} \right)$$

For $k,N$ two positive integers, we denote by $\mathcal{M}_k(\Gamma_1(N))$
(resp. $\mathcal{S}_k(\Gamma_1(N))$) the space of holomorphic modular
forms (resp. holomorphic parabolic forms) of weight $k$ for $\Gamma_1(N)$.

Such forms have Fourier developments of the type $f(z)=\sum_{n=0}^\infty a_n
e^{2i\pi n z}$. We will often write $q$ instead of $e^{2i\pi z}$, or
sometimes $e(z)$, where $e$ is the function $z \mapsto e^{2i\pi z}$.

In the case where all the Fourier coefficients of $f = \sum a_n q^n$ are
algebraic, we set the $p$-adic norm of $f$ to be $|f|_p = \sup_{n}
|a_n|_p$. It is a well defined norm.

We define the Petersson inner product over these spaces of modular forms by 
$$<f,g>_{\Gamma_1(N)} = \int_{\Gamma_1(N) \setminus H} f(z) \bar{g}(z) y^{k}
\frac{dx dy}{y^2}$$

If $\chi$ is a Dirichlet character modulo $N$, we extend it in a character of
$\Gamma_0(N)$ by 
$$\chi(\gamma) = \chi(d)\text{ where } \gamma = \begin{pmatrix} a & b \\ c & d
\end{pmatrix} \in \Gamma_0(N)$$
Then, the subspace of $\mathcal{M}_k(\Gamma_1(N))$
(resp. $\mathcal{S}_k(\Gamma_1(N))$) of functions $f$
satisfying $$f|_k \gamma = \chi(\gamma) f, \forall \gamma  \in
\Gamma_0(N)$$ is denoted by $\mathcal{M}_k(\Gamma_0(N),\chi)$ (resp. $\mathcal{S}_k(\Gamma_0(N),\chi)$).

The Hecke operator of index $n$ will be denoted by $T(n)$, and the
matrix of the principal involution of level $N$ by $W_N = \begin{pmatrix} 0 & -1 \\ N & 0
\end{pmatrix}$.

\subsection{Nearly holomorphic modular forms}

As in the previous section, we just specify here the notations for later
use. A good introductory reference about nearly holomorphic modular forms
is \cite{hida}.

Nearly holomorphic modular forms are functions defined over $H$, infinitely
differentiable when considered as functions in two real variables, which
satisfy some automorphic properties (the same as classical modular forms)
and growth conditions.
We denote by $\mathcal{M}_k^r(\Gamma_0(N),\chi)$ the space of nearly
holomorphic modular forms of weight $k$, type $r$, and Dirichlet character
$\chi$ modulo $N$.

Recall that such functions have a Fourier expansion of the type : 
$$f(z) = \sum_{j=0}^r (4 \pi \text{Im}(z))^{-j} \sum_{n=0}^\infty a(n,j,f)e^{2 i
  \pi n z} = \sum_{j=0}^r (4 \pi y)^{-j} \sum_{n=0}^\infty a(n,j,f) q^n$$

In the case when all the Fourier coefficients of $f = \sum_{i=0}^r (4\pi y)^{-i}
\sum_{n=0}^{\infty} a(i,n) q^n$ are algebraic, we define the $p$-adic
norm of $f$ by : $|f|_p = \sup_{i,n}|a(i,n)|_p$. It is a well-defined norm,
extending the one over holomorphic modular forms with algebraic coefficients.

\subsection{Distributions and measures}

Let $Y$ be a profinite topological group, \emph{i.e.} a projective limite
of finite groups : $Y =\displaystyle \varprojlim_{i} Y_i$ (in our case we only need to
consider the case where $Y = \displaystyle \varprojlim_\nu (\ensqo{\mathbb{Z}}{ N p^\nu
\mathbb{Z}})^\times$), $Y$ being equipped with the topology induced by the
product topology, where the $Y_i$ are considered as discrete groups.

\begin{defi}
As usual when $Y=\Zp$, $a + (p)_\nu = (a)_\nu$ will denote the open set
$\left\{ x \in \Zp \mid a \equiv \text{ mod } p^\nu \right\}$, and when $Y
= \varprojlim_\nu (\ensqo{\mathbb{Z}}{Np^\nu \mathbb{Z}})^\times$,
$(a)_\nu$ will denote the open set $\left\{ x \in Y \mid x \equiv a \text{
      mod } Np^\nu \right\}$.
\end{defi}

As in classical analysis, distributions are linear applications over some
function spaces.

\begin{defi}
Let $A$ be a commutative ring. We denote by $\mathbf{Step}(Y,A)$ the space
of locally constant functions over $Y$, with values in $A$. It is easy to
see that such functions can be factored trough one of the $Y_i$.
If $M$ is an $A$-module, we call distribution over $Y$ with values in $M$
a $A$-linear application : $\mu : \mathbf{Step}(Y,A) \rightarrow M$.
In the case where $A$ is equipped with a norm, a distribution is called a measure if it
is uniformly bounded over the open sets of $Y$.

\end{defi}

In this article, we just need to consider a certain class of distributions,
namely admissible measures (which we define only for $Y =
\varprojlim_\nu (\ensqo{\mathbb{Z}}{N p^\nu \mathbb{Z}} )^\times$). A good
reference about admissible measures is \cite{visik}

\begin{defi}
For $h>0$, let $\mathcal{C}^h(Y,\Cp)$ be the space of
locally polynomial functions of the variable $x_p$, of degree strictly less
than $h$ (where $x_p : Y \rightarrow \Cp$ is the canonical projection).
\end{defi}

\begin{rema}
  For $h=1$, it is the space of locally constant functions 
${\mathcal{C}^1(Y,\Cp) =
\mathbf{Step}(Y,\Cp)}$
\end{rema}

\begin{defi}
An $h$-admissible measure is a linear form
 $\mu :\mathcal{C}^h(Y,\Cp) \rightarrow \Cp$ verifying the following growth
 condition : for $r=0,\dots,h-1$
$$\left| \sup_{a \in Y} \int_{a + (Np^\nu)} (x_p - a_p)^r d\mu
  \right|_p = o\left(|p^\nu|_p^{r-h}\right)$$
\end{defi}

Every such linear form can be extended in an unique way in a linear form
over
$\mathcal{C}^{loc-an}(Y,\Cp)$, the space of locally
analytic functions.

For an admissible measure $\mu$, we define its Mellin transform $\mathcal{L}_\mu$ to be the
function defined over $X_p = \mathbf{Hom_{cont}}(Y,\Cp^\times)$ by
$$\mathcal{L}_\mu(x) = \mu(x) = \int_Y xd\mu$$
In fact, it can be proved that this function is always $\Cp$-analytic (over
the $p$-adic analytic group
$\mathbf{Hom_{cont}}(Y,\Cp^\times)$). Moreover, we have the
following important unicity result : 

\begin{theo}\label{unicite-admissible}
Let $\tilde{\Phi}$ be an $h$-admissible measure over $\Zp$, with values in
$\Cp$. Then $\tilde{\Phi}$ is uniquely determined by the numbers: 
$$
L_{\tilde{\Phi}}(\chi(x)x_p^j) = \int_{\Zp} \chi(x)x_p^j d\tilde{\Phi}
\text{, where } j=0,1,\dots,h-1 \text{ and } \chi \in X_p^{\text{tors}}$$
\end{theo}

\section{Panchishkin's method}

In \cite{hida_rankin}, Hida treated the case where $f$ is what he calls an
ordinary form, meaning that its $p$-th Fourier coefficient is a $p$-adic
unit. To that goal, he uses spaces of $p$-adic modular form, \emph{i.e.}
subrings of $\Cp[[X]]$ generated by formal series corresponding to modular
forms of fixed weight, level or character. Then, using inverse limits, he
introduces $p$-adic Hecke algebras acting over these spaces of $p$-adic
modular forms. Especially, he constructs an element $e$ of these Hecke
algebras whcih is idempotent and whose image is of finite dimension. Here
is the point where $f$ needs to be ordinary since in the contrary case
$f\mid e = 0$. The end of its method is quite similar to the one we will
develop later, also using nearly holomorphic modular forms and a similar
linear form.

However, this construction is quite fastidious and only works with ordinary
forms.

The method we expose now was developped by Alexei Panchishkin in
\cite{pant-new}, \cite{pant-bordeaux} and \cite{pant-invent} in order to
construct $p$-adic $L$ functions of modular forms. In a general way, there
are two principal steps : 
\begin{itemize}
\item the construction of distributions with values in finite dimensional
  spaces of nearly holomorphic modular forms
\item the use of a linear form in order to obtain scalar-valued distributions.
\end{itemize}

\subsubsection{Spaces of modular forms}

Let $N'$ be a non-negative integer prime to $p$, $k$ a positive integer and
$r$ a non-negative integer.
Then we denote by $\mathcal{M}_{r,k}(N'p^\nu,\bar{\mathbb{Q}})$ the space
of nearly holomorphic modular forms for $\Gamma_1(N'p^\nu)$, of weight $k$,
of type $r$ and whose coefficients are algebraic.

\begin{defi}
Using previous notations, we define
$$\mathcal{M} = \mathcal{M}_{r,k} = \bigcup_{\nu  \geq 0}
\mathcal{M}_{r,k}(N'p^\nu,\bar{\mathbb{Q}})$$
It is a vector space over $\bar{\mathbb{Q}}$, unfortunately of infinite
dimension.
\end{defi}

This space of modular forms is equipped with a $p$-adic norm given by 
$$\left|\sum_{i=0}^r (4\pi y)^{-i} \sum_{n=0}^\infty a(i,n) q^n \right|_p =
\sup_{i,n} |a(i,n)|_p.$$
and with the Atkin-Lehner operator $U=U_p$ defined as follows
$$g \mid_k U^m = p^{m(k/2-1)} \sum_{u \text{ mod } p^m} g\mid_k
  \begin{pmatrix} 1 & u \\ 0 & p^m \end{pmatrix} = p^{-m} \sum_{u \text{
      mod } p^m} g\left( \frac{z+u}{p^m} \right)$$
It acts over the Fourier coefficients through
$$\left(\sum_{i=0}^r (4\pi y)^{-i} \sum_{n=0}^\infty a(i,n)q^n \right)
\mid_k U^m = \sum_{i=0}^r (4\pi y)^{-i} p^{mi} \sum_{n=0}^\infty a(i,p^m
n)q^n$$
so that it is a $p$-integral operator.

Serre proved in \cite{serre} the following formula, denoting $N'_0 = N'p$
$$U^m = p^{m(k/2-1)}W_{N'_0p^m} \text{Tr}^{N'_0 p^m}_{N'_0} W_{N'_0}$$
implying that
$U^m \left(\mathcal{M}_{r,k}(N'_0 p^m)\right)
\subset \mathcal{M}_{r,k}(N'_0)$

\subsubsection{Projection over finite-dimensional subspaces}

We wish now to work with finite dimensional subspaces of
$\mathcal{M}_{r,k}$.
The very first idea would be to consider the trace operator given by :
$$\text{Tr}^{N'_0p^\nu}_{N'_0} = \sum_{\gamma \in \Gamma_0(N'_0p^\nu) \backslash
  \Gamma_0(N'_0)} f\mid_k \gamma$$

After normalization, it defines a projector given by
$$\left[\Gamma_0(N'_0) : \Gamma_0(N'_0p^\nu) \right]^{-1}
\text{Tr}^{N'_0p^\nu}_{N'_0}$$

However, when $\nu$ grows, the $p$-adic norm of the denominator grows quickly.

Instead of the trace, we will consider projection over finite dimensional
subspaces associated with the operator $U$.

\begin{defi}
For $\alpha \in \bar{\mathbb{Q}}$, we set $\mathcal{M}^\alpha = 
\displaystyle \bigcup_{n\geq 1} \text{Ker}(U-\alpha I)^n$ the characteristic subspace
associated with $\alpha$. We equally denote $\mathcal{M}^\alpha(N'p^\nu) = \mathcal{M}^\alpha \cap
\mathcal{M}_{r,k}(N'p^\nu)$.
\end{defi}

\begin{prop} Let $\alpha$ be a non-zero element of $\bar{\mathbb{Q}}$. Then
\begin{enumerate}[i)]
\item $(U^\alpha)^m : \mathcal{M}^\alpha(N'_0p^m) \to
  \mathcal{M}^\alpha(N'_0p^m)$ is an invertible operator $(m \in
  \mathbb{N})$.
\item The $\bar{\mathbb{Q}}$-vector space $\mathcal{M}^\alpha(N'_0p^m)$
  does not depend over $m \in \mathbb{N}$ and is equal to
  $\mathcal{M}^\alpha(N'_0)$.
\item Let $\pi_{\alpha,m} : \mathcal{M}(N'_0p^m) \to
  \mathcal{M}^\alpha(N'_0p^m)$ be the canonical projector over the
  characteristic subspace associated to $\alpha$, with kernel
$$\text{Ker} \pi_{\alpha,m} = \bigcap_{n \geq 1} \text{Im}(U - \alpha I)^n =
  \bigoplus_{\beta \neq \alpha} \mathcal{M}^{\beta}(N'_0p^m)$$
Then the following diagram is commutative 
$$\xymatrix{
\mathcal{M}(N'_0p^m) \ar[rr]_{\pi_{\alpha,m}} \ar[d]_{U^m} &&
\mathcal{M}^\alpha(N'_0p^m) \ar[d]^{(U^\alpha)^m} \\
\mathcal{M}(N'_0) \ar[rr]_{\pi_{\alpha,0}} && 
\mathcal{M}^\alpha(N'_0) 
}
$$
\end{enumerate}
\end{prop}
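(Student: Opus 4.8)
The plan is to regard the whole statement as finite-dimensional linear algebra for the single endomorphism $U$ of the finite-dimensional $\bar{\mathbb{Q}}$-vector space $\mathcal{M}_{r,k}(N'_0p^m)$, the only genuinely modular inputs being the classical fact that $U$ stabilizes $\mathcal{M}_{r,k}(N'p^\nu)$ whenever $p\mid N'p^\nu$ (every level occurring here is $N'_0p^m=N'p^{m+1}$, hence divisible by $p$) together with Serre's identity $U^m\bigl(\mathcal{M}_{r,k}(N'_0p^m)\bigr)\subseteq\mathcal{M}_{r,k}(N'_0)$ recalled above. If $\alpha$ is not an eigenvalue of $U$ on $\mathcal{M}_{r,k}(N'_0p^m)$ all the spaces in the statement vanish and there is nothing to prove, so I assume it is one. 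I expect the real difficulty to be concentrated in the non-obvious inclusion of (ii): this is the one place where the invertibility of (i) and Serre's level-lowering identity must genuinely be combined, everything else being bookkeeping with the primary decomposition of $U$.

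For (i), one first checks that $\mathcal{M}^\alpha(N'_0p^m)$ is $U$-stable — it is, since $U$ stabilizes $\mathcal{M}_{r,k}(N'_0p^m)$ and commutes with every $(U-\alpha I)^n$ — and finite-dimensional, so $U$ acts on it as $\alpha I+\mathcal{N}$ with $\mathcal{N}$ nilpotent; as $\alpha\neq0$ this is invertible, which is exactly the invertibility of $(U^\alpha)^m$, the endomorphism induced by $U^m$ on $\mathcal{M}^\alpha(N'_0p^m)$. For (ii), the inclusion $\mathcal{M}^\alpha(N'_0)\subseteq\mathcal{M}^\alpha(N'_0p^m)$ is immediate from $\mathcal{M}_{r,k}(N'_0)\subseteq\mathcal{M}_{r,k}(N'_0p^m)$ and the compatibility of $U$. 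For the reverse inclusion, note that by (i) the image $U^m\bigl(\mathcal{M}^\alpha(N'_0p^m)\bigr)$ equals $\mathcal{M}^\alpha(N'_0p^m)$, while, since $U^m$ preserves characteristic subspaces of $U$, Serre's identity forces that image into $\mathcal{M}_{r,k}(N'_0)\cap\mathcal{M}^\alpha=\mathcal{M}^\alpha(N'_0)$; comparing the two descriptions gives $\mathcal{M}^\alpha(N'_0p^m)\subseteq\mathcal{M}^\alpha(N'_0)$, hence equality.

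For (iii), since $\bar{\mathbb{Q}}$ is algebraically closed the finite-dimensional $U$-module $\mathcal{M}(N'_0p^m)$ is the direct sum of its characteristic subspaces, $\mathcal{M}(N'_0p^m)=\bigoplus_\beta\mathcal{M}^\beta(N'_0p^m)$ (the sum over the eigenvalues $\beta$ of $U$), and $\pi_{\alpha,m}$ is by definition the projector onto the $\alpha$-summand, so $\text{Ker}\,\pi_{\alpha,m}=\bigoplus_{\beta\neq\alpha}\mathcal{M}^\beta(N'_0p^m)$. To match this with $\bigcap_{n\geq1}\text{Im}(U-\alpha I)^n$, one argues summand by summand: $U-\alpha I$ is nilpotent on $\mathcal{M}^\alpha(N'_0p^m)$, so a high enough power of it kills that summand, whereas on $\mathcal{M}^\beta(N'_0p^m)$ with $\beta\neq\alpha$ one writes $U-\alpha I=(U-\beta I)+(\beta-\alpha)I$ with $U-\beta I$ nilpotent and $\beta-\alpha\neq0$, so $U-\alpha I$ is invertible there; hence the stabilized image $\bigcap_{n\geq1}\text{Im}(U-\alpha I)^n$ equals exactly $\bigoplus_{\beta\neq\alpha}\mathcal{M}^\beta(N'_0p^m)$.

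It remains to prove that the square commutes. Here one uses that $\pi_{\alpha,m}$ and $\pi_{\alpha,0}$ are polynomials in $U$ (Chinese remainder theorem applied to the factorization of the minimal polynomial), hence commute with $U$, and that by (ii) both characteristic subspaces coincide with $\mathcal{M}^\alpha(N'_0)$, so $(U^\alpha)^m$ is simply the restriction of $U^m$. For $g\in\mathcal{M}(N'_0p^m)$, write $g=\pi_{\alpha,m}g+g'$ with $g'\in\text{Ker}\,\pi_{\alpha,m}=\bigoplus_{\beta\neq\alpha}\mathcal{M}^\beta(N'_0p^m)$; applying Serre's identity to each primary summand gives $U^m(\pi_{\alpha,m}g)\in\mathcal{M}^\alpha(N'_0)$ and $U^m g'\in\bigoplus_{\beta\neq\alpha}\mathcal{M}^\beta(N'_0)=\text{Ker}\,\pi_{\alpha,0}$. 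Hence $\pi_{\alpha,0}(U^m g)=\pi_{\alpha,0}(U^m\pi_{\alpha,m}g)=U^m(\pi_{\alpha,m}g)=(U^\alpha)^m(\pi_{\alpha,m}g)$, which is precisely the commutativity of the diagram.
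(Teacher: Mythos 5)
Your proposal is correct and follows essentially the same route as the paper: invertibility of $(U^\alpha)^m$ from finite-dimensionality and $\alpha\neq 0$, the chain $\mathcal{M}^\alpha(N'_0)\subseteq\mathcal{M}^\alpha(N'_0p^m)=U^m\bigl(\mathcal{M}^\alpha(N'_0p^m)\bigr)\subseteq\mathcal{M}^\alpha(N'_0)$ via Serre's identity for (ii), and for (iii) the fact that the projector is a polynomial in $U$ together with the primary decomposition. Your treatment of the kernel identification and of the commuting square is merely a more explicit spelling-out of the paper's one-line observation that $\pi_{\alpha,m}$ restricted to $\mathcal{M}(N'_0)$ coincides with $\pi_{\alpha,0}$.
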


\begin{proof}
\begin{enumerate}[i)]
\item The linear operator $(U^\alpha)^m$ acts over a
  $\bar{\mathbb{Q}}$-vector space of finite dimension, with non-zero determinant.
\item Obviously,  $\mathcal{M}^\alpha(N'_0) \subset
  \mathcal{M}^\alpha(N'_0p^m)$. But by i),
$$\mathcal{M}^\alpha(N'_0)\subset \mathcal{M}^\alpha(N'_0p^m) =
 U^m\left(\mathcal{M}^\alpha(N'_0p^m) \right) \subset \mathcal{M}^\alpha(N'_0)$$
\item Being in finite dimension, it is a well-knowm fact that the
  projector $\pi_{\alpha,m}$ is a polynomial in $U$, so commutes with $U$.
Moreover, the restriction of $\pi_{\alpha,m}$ to
$\mathcal{M}(N'_0)$ coincides with $\pi_{\alpha,0}$ because its image is
$$\bigcup_{n \geq 1}  \text{Ker} (U - \alpha I)^n \cap  \mathcal{M}(N'_0) =
\bigcup_{n \geq 1} \text{Ker} (U|_{\mathcal{M}(N'_0)} - \alpha I)^n$$
and its kernel is
$$ \bigcap_{n \geq 1} \text{Im} (U - \alpha I)^n \cap \mathcal{M}(N'_0) =
\bigcap_{n \geq 1} \text{Im}(U|_{\mathcal{M}(N'_0)} - \alpha I)^n$$
\end{enumerate}
\end{proof}

\subsubsection{Modular distributions}

We now consider distributions over $Y = \displaystyle \varprojlim_\nu Y_\nu$ taking
values in $\mathcal{M}$.

The following simple fact, directly coming from the orthogonality of
characters will be very useful later :

\begin{prop}
Let $\Phi$ be a distribution over $Y$ and $a \in Y_\nu$. Then
$$\Phi(a+(Np^\nu)) = \frac{1}{\varphi(Np^\nu)} \sum_{\chi \text{ mod }
  Np^\nu} \chi(a)^{-1} \Phi(\chi)$$
where $\varphi$ is Euler's totient function, and the sum is taken over all
the Dirichlet characters modulo $Np^\nu$.
\end{prop}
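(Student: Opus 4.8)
The plan is to expand the characteristic function of the coset $a+(Np^\nu)=(a)_\nu$ as a $\bar{\mathbb{Q}}$-linear (resp. $A$-linear) combination of Dirichlet characters modulo $Np^\nu$, and then simply invoke the linearity of $\Phi$. First I would make the notation precise: $\Phi(a+(Np^\nu))$ denotes the value of $\Phi$ on the indicator function $\mathbf{1}_{(a)_\nu}\in\mathbf{Step}(Y,A)$, while a Dirichlet character $\chi$ modulo $Np^\nu$ is regarded here as the locally constant function $Y\to A$ obtained by composing the canonical projection $Y\to Y_\nu=(\ensqo{\mathbb{Z}}{Np^\nu\mathbb{Z}})^\times$ with $\chi:Y_\nu\to A^\times$ (no extension by $0$ is needed, since on $Y$ the projection already lands in $Y_\nu$); thus $\Phi(\chi)$ is meaningful.

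Next I would record the orthogonality relation for the characters of the finite abelian group $Y_\nu$, whose order is $\varphi(Np^\nu)$: for $y\in Y_\nu$ the sum $\sum_{\chi\bmod Np^\nu}\chi(y)$ equals $\varphi(Np^\nu)$ if $y=1$ and $0$ otherwise. This is the standard fact that the dual group $\widehat{Y_\nu}$ has the same cardinality as $Y_\nu$, together with the observation that if $y\neq 1$ one may choose $\chi_0$ with $\chi_0(y)\neq 1$, whence $\chi_0(y)\sum_{\chi}\chi(y)=\sum_{\chi}\chi(y)$ forces the sum to vanish. Applying this to $y=a^{-1}x$ for $x\in Y_\nu$ and dividing by $\varphi(Np^\nu)$ gives, for all $x\in Y$ (both sides factoring through $Y_\nu$),
$$\mathbf{1}_{(a)_\nu}(x)=\frac{1}{\varphi(Np^\nu)}\sum_{\chi\bmod Np^\nu}\chi(a)^{-1}\chi(x).$$
Since this is already an identity in $\mathbf{Step}(Y,A)$, applying the $A$-linear map $\Phi$ term by term yields
$$\Phi(a+(Np^\nu))=\frac{1}{\varphi(Np^\nu)}\sum_{\chi\bmod Np^\nu}\chi(a)^{-1}\Phi(\chi),$$
which is the asserted formula.

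There is no genuine obstacle in this argument: it is pure finite Fourier analysis on $Y_\nu$, and the proposition is recorded here only for later convenience. The two points deserving a word of care are purely formal, namely the interpretation of $\Phi(a+(Np^\nu))$ and of $\Phi(\chi)$ described above, and the harmless requirement that the coefficient ring $A$ — in all the applications of this article a characteristic-zero field such as $\bar{\mathbb{Q}}$ or $\Cp$ — contain the character values and the inverse of $\varphi(Np^\nu)$, so that the displayed expansion of $\mathbf{1}_{(a)_\nu}$ takes place in $\mathbf{Step}(Y,A)$ before $\Phi$ is applied.
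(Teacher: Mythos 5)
Your proof is correct and is exactly the argument the paper has in mind: the paper offers no written proof, merely noting that the proposition comes ``directly from the orthogonality of characters,'' which is precisely the expansion of $\mathbf{1}_{(a)_\nu}$ in characters of $Y_\nu$ followed by linearity of $\Phi$ that you carry out. Your added remarks on interpreting $\Phi(\chi)$ and on the invertibility of $\varphi(Np^\nu)$ in $A$ are sensible but not points the paper dwells on.
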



\begin{defi}
Let $\Phi$ be a distribution over $Y$ with values in $\mathcal{M}$ and
$\alpha$ a non-zero eigenvalue of $U$ over $\mathcal{M}$. We define
the $\alpha$-primary part $\Phi^\alpha$ of $\Phi$ by
$$\int_Y \varphi d\Phi^\alpha = \pi_{\alpha,0}(\Phi(\varphi)) =
(U^\alpha)^{-\nu} \pi_{\alpha,0} \left(\left(\int_Y \varphi d\Phi \right)
 \mid_k U^\nu \right) \in \mathcal{M}^\alpha$$
for every step function $\varphi$ in
$\mathbf{Step}(Y,\bar{\mathbb{Q}})$ and $\nu$ such that $\int_Y \varphi
d\Phi \in \mathcal{M}(N'_0p^\nu)$.
\end{defi}

\begin{rema}
The third assertion of the previous proposition proves that this definition
does not depend over sufficiently large $\nu$.
\end{rema}

\begin{theo}
If $|\alpha|_p = 1$ (\emph{i.e.} if $\alpha$ is a $p$-adic unit), then
$\Phi^\alpha$ is a measure over $Y$ with values in $\mathcal{M}^\alpha$.
\end{theo}

\begin{proof}
It is sufficient to prove that the distribution $\Phi^\alpha$ is in fact a
measure, that is that there exists a constant $C > 0$ such that for every
open set $a + (Np^m)$, $|\Phi^\alpha(a + (Np^m))|_p \leq C$. 
By hypothesis, there exists $m'$ such that 
$$\Phi(a+(Np^m)) \in \mathcal{M}(N'p^{m'+1})$$
Then the $\alpha$-primary part of $\Phi$ is 
$$\Phi^{\alpha}(a+(Np^m)) = (U^\alpha)^{-m'} \pi_{\alpha,0}(\Phi(a+(Np^m))
| U^{m'})$$

Over the subspace $\mathcal{M}^\alpha \subset \mathcal{M}$, $U^\alpha
= \alpha I + Z$ where $Z$ is a $p$-integral nilpotent operator. 
Thereby for $f \in \mathcal{M},$ $|f|U|_p~\leq~|f|_p$
and $|f|Z|_p \leq |f|_p$.

Next, all the functions $$\Phi(a+(Np^m))^\alpha =\Phi^\alpha(a + (Np^m))= \alpha^{-m'}(\alpha (U^\alpha)^{-1})^{m'}
\pi_{\alpha,0}(\Phi(a+ (Np^m))|U^{m'})$$ are uniformly bounded since $|\alpha^{-1}|_p=1$ and
$$(\alpha (U^\alpha)^{-1})^{m'} = (\alpha^{-1} U^\alpha)^{-m'} = (I +
\alpha^{-1} Z)^{-m'} = \sum_{j=0}^{n-1} { {-m'} \choose j} \alpha^{-j} Z^j$$
where $n$ is the dimensions of $\mathcal{M}^\alpha$ over  $\bar{\mathbb{Q}}$.

Binomial coefficients ${-m'} \choose
j$ being $p$-adic integers, this  proves that the
sum is bounded.
\end{proof}

Now, we consider a finite family of distributions $\Phi_j :
\mathbf{Step}(Y,\bar{\mathbb{Q}}) \rightarrow \mathcal{M}, (j=0,1,
\dots,r^*)$ and we prove an important theorem which will allow us to
``glue'' together the distributions $\Phi_j$ (which are not necessarily
bounded) to an admissible measure.

\begin{theo} Let $\alpha$ be a non-zero eigenvalue of $U$, with
  $0<|\alpha|_p < 1$, and let $h = [v_p(\alpha)] +1$. Assume that there
  exists an integer $\kappa \in \mathbb{N}^*$ such that $\kappa h \leq
  r^*+1$ and verifying the two following conditions :
\begin{enumerate}[i)]
\item Level condition : for all $j \in \{0,1,\dots,r^*\}$, all $\nu \geq 1$
  and all $a \in Y_\nu$
$$\Phi_j(a + (Np^\nu)) \in \mathcal{M}(N'_0p^{\kappa \nu})$$
\item Divisibility condition : there exists $C>0$ such that for all $t\in \{0,1,\dots,r^* \}$, all
  $\nu \geq 1$ and all $a \in Y_\nu$ 
$$\left| U^{\kappa \nu} \sum_{j=0}^{t} {t \choose j} (-a_p)^{t-j}
  \Phi_j(a+(Np^\nu)) \right|_p \leq C p^{-\nu t}$$
\end{enumerate}
Then the linear map $\tilde{\Phi}^\alpha : \mathcal{C}^{r^*+1} (Y,\bar{\mathbb{Q}})
\rightarrow \mathcal{M}^\alpha$ defined by
$$\int_{a + (Np^\nu)} y_p^j d \tilde{\Phi}^\alpha =
\pi_{\alpha,0}\left(\Phi_j(a+(Np^\nu))\right)$$
for all $j \in \{0,1,\dots,r^*\}$ is a $r^* +1$-admissible measure.
\end{theo}

\begin{proof}
To prove that the linear map we consider is an admissible measure, it is
sufficient to prove the growth condition defining admissible measures, that
is to say that for all $a \in Y$ and all $t\in \{0,1,\dots,r^*\}$
$$\left|\int_{a + (Np^\nu)} (y_p - a_p)^t d\tilde{\Phi}^\alpha \right|_p =
o(p^{\nu(r^*+1 -t)}) \text{ as } \nu\rightarrow + \infty$$
On the one hand, over the subspace $\mathcal{M}^\alpha(N'_0)$, we have  $U = \alpha I + Z$ with
$Z$ a nilpotent operator, verifying $Z^n=0$ where $n$ is the dimension of
$\mathcal{M}^\alpha(N'_0) =
\mathcal{M}^\alpha(N'_0 p^\nu)$ over $\bar{\mathbb{Q}}$.

On the other hand, by the definition of $\tilde{\Phi}^\alpha$ : 
\begin{multline*}
\int_{a+(Np^\nu)} (y_p - a_p)^t d \tilde{\Phi}^\alpha = \sum_{j=0}^t {t
  \choose j} (-a_p)^{t-j} \pi_{\alpha,0}(\Phi_j(a+(Np^\nu))) \\
= \alpha^{-\kappa \nu } \alpha^{\kappa \nu} U^{-\kappa \nu} \left[
\pi_{\alpha,0} U^{\kappa \nu} \left( \sum_{j=0}^t {t \choose j}
  (-a_p)^{t-j} \Phi_j(a+(Np^\nu)) \right) \right]
\end{multline*}

Moreover, by the same argument than previously, the operators
$$\alpha^{\kappa \nu} U^{-\kappa \nu} = (\alpha^{-1} U)^{- \kappa \nu} = (I
+ \alpha^{-1} Z)^{- \kappa \nu} = \sum_{i=0}^{n-1} { - \kappa \nu \choose
  i} (\alpha^{-1} Z)^i$$
are uniformly bounded by a constant $C_1>0$, so by the divisibility condition 
$$\left| \int_{a + (Np^\nu)} (y_p - a_p)^t d \tilde{\Phi}^\alpha \right|_p
\leq C C_1 |\alpha|_p^{- \nu \kappa} |p^\nu|^t = o(p^{\nu(r^*+1 -t)})$$
when $\nu \rightarrow + \infty$ because $|\alpha|_p = p^{-v_p(\alpha)},
v_p(\alpha) < h$, so $p^{\kappa \nu v_p(\alpha)} = o(p^{\nu \kappa h}) =
o(p^{\nu (r^*+1)})$.

\end{proof}

\subsubsection{Eigenfunctions of Atkin-Lehner's operator}

We now introduce some basic notions about eigenfunctions of the operator $U$.

\begin{defi}
A primitive cusp form $f = \displaystyle \sum_{n\geq 0} a_n q^n \in
\mathcal{S}_k(\Gamma_0(N),\psi)$ is said to be associated with the
eigenvalue  $\alpha$ if there exists a cusp form $f_0 = \displaystyle \sum_{n=1}^\infty a(n,f_0) q^n$
such that $f_0 \mid U = \alpha f_0$ and $f_0 \mid T(l) = a_l f_0$ for every $l
\nmid Np$.
\end{defi}

In fact, if $f = \sum_{n=1}^\infty a_n q^n$ is a primitive cusp form, it is
necessarily associated with an eigenvalue.\\
Let $X^2 - a_p X + \psi(p)p^{k-1}$ be the $p$-th Hecke polynomial
associated with this form, factored into $(X - \alpha)(X - \alpha')$.
Let $f_0 = f_{0,\alpha}$ be defined by $f_0 = f - \alpha' V(f)$ where $V$ acts through $V\left( \sum_{n=0}^\infty a_n q^n \right) =
\sum_{n=0}^\infty a_n q^{pn}$

\begin{prop}
Under the previous hypothesis $f_0 \mid U = \alpha f_0$.
\end{prop}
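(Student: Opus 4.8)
The plan is to reduce everything to the action of $U$ and $V$ on $q$-expansions, combined with the Hecke recursion for the Fourier coefficients of a primitive form of level prime to $p$. First I would recall the formulas stated just above: on a holomorphic form $\sum_{n\geq 0} a_n q^n$ the operator $V$ acts by $V\left(\sum a_n q^n\right) = \sum a_n q^{pn}$ and (taking $i=0$ and $m=1$ in the displayed action of $U$ on Fourier coefficients) the operator $U$ acts by $\sum a_n q^n \mapsto \sum a_{pn} q^n$. In particular $U\circ V = \mathrm{id}$ on these $q$-expansions, so that $V(f)\mid U = f$.

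Next I would use that $f = \sum_{n\geq 1} a_n q^n$ is primitive of level $N$ with $p\nmid N$: the multiplicativity of the $a_n$ together with the two-term recursion at $p$ gives $a_{pn} = a_p\,a_n - \psi(p)p^{k-1} a_{n/p}$ for every $n\geq 1$, with the usual convention $a_{n/p}=0$ when $p\nmid n$ (and with no constant-term issue since $f$ is a cusp form). Summing against $q^n$ and recognizing $\sum_n a_{n/p}q^n = V(f)$ yields the identity
$$f\mid U = a_p\,f - \psi(p)p^{k-1}\,V(f).$$

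Then the proposition follows by a one-line computation. Since $f_0 = f - \alpha' V(f)$ and $V(f)\mid U = f$,
$$f_0\mid U = f\mid U - \alpha'\,\bigl(V(f)\mid U\bigr) = a_p\,f - \psi(p)p^{k-1}\,V(f) - \alpha'\,f = (a_p-\alpha')\,f - \psi(p)p^{k-1}\,V(f).$$
Finally I would insert the relations $\alpha+\alpha' = a_p$ and $\alpha\alpha' = \psi(p)p^{k-1}$ coming from the factorization $X^2 - a_pX + \psi(p)p^{k-1} = (X-\alpha)(X-\alpha')$, which turns the right-hand side into $\alpha f - \alpha\alpha'\,V(f) = \alpha\bigl(f - \alpha' V(f)\bigr) = \alpha f_0$, as claimed.

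There is no genuine obstacle here; the only points deserving a word of care are that the weight-$k$ normalization of $U$ chosen above (the factor $p^{k/2-1}$ in its definition) is precisely the one making the $q$-expansion rule $\sum a_n q^n \mapsto \sum a_{pn}q^n$ hold, which is recorded in the excerpt, and that the recursion $a_{pn} = a_p a_n - \psi(p)p^{k-1}a_{n/p}$ is exactly the Hecke relation valid for a newform whose level is prime to $p$.
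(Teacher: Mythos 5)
Your proof is correct and rests on exactly the same ingredients as the paper's: the action of $U$ and $V$ on $q$-expansions, the Hecke recursion $a_{pn}=a_pa_n-\psi(p)p^{k-1}a_{n/p}$, and the relations $\alpha+\alpha'=a_p$, $\alpha\alpha'=\psi(p)p^{k-1}$. The only difference is presentational: you package the recursion into the single operator identity $f\mid U=a_pf-\psi(p)p^{k-1}V(f)$ (with the convention $a_{n/p}=0$ for $p\nmid n$), whereas the paper verifies the equality of Fourier coefficients by splitting into the cases $(n,p)=1$ and $p\mid n$.
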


\begin{proof}
The Fourier expansion of $f_0$ is 
$$\sum_{n=1}^\infty a(n,f_0) q^n = \sum_{n=1}^\infty a_n q^n - \alpha'
\sum_{n=1}^\infty a_n q^{pn}$$
The expansion of $f_0 \mid U = f \mid U - \alpha' f$ is 
$$\sum_{n=1}^\infty a_{pn} q^n - \alpha' \sum_{n=1}^\infty a_n q^n$$

To be able to link these two functions, we need to know more about their
Fourier coefficients, and so about those of $f$. $f$ being primitive,
we already know that  $f \mid T(p) = a_p f$. But a well-known formula
asserts that the $n$-th Fourier coefficient $b_n$ of $f \mid T(p)$ is given
by
\begin{eqnarray*}
b_n = \begin{cases} a_{pn} \text{ si } (n,p)=1 \\
a_{pn} + \psi(p)p^{k-1} a_{\frac{n}{p}} \text{ si } (n,p)=p
\end{cases}
\end{eqnarray*}

If $(n,p)=1$, then $a_n a_p = a_{np}$. But the  $n$-th coefficient of
$f_0 \mid U$ is $a_{pn} - \alpha' a_n = a_{pn} - (a_p - \alpha) a_n =
\alpha a_n$, which is exactly the $n$-th coefficient of $\alpha f_0$.

In the case where $(n,p)=p$, by what we said before, Fourier coefficients of $f$ verify 
$$a_p a_n = a_{pn} + \psi(p)p^{k-1} a_{\frac{n}{p}}.$$
So $(\alpha + \alpha')a_n = a_{pn} + \alpha \alpha' a_{\frac{n}{p}}$.

Eventually
$$\alpha (a_n - \alpha' a_{\frac{n}{p}}) = a_{pn} - \alpha' a_n$$
so that $ \alpha f_0$ and $f_0 \mid U$ have the same Fourier coefficients,
so are equal.
\end{proof}


There is no difficulty to verify the property about Hecke operators of rank
prime to $Np$, proving the following : 

\begin{prop}
Every primitive cusp form is associated with at least an eigenvalue $\alpha$ of $U$.
Moreover, we can choose for $\alpha$ a root of the $p$-th Hecke polynomial
of $f$.
\end{prop}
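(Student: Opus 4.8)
The plan is to prove this by reducing to the previous proposition, which already establishes the statement for the particular eigenvalue $\alpha$ obtained as a root of the $p$-th Hecke polynomial, namely $f_0 = f - \alpha' V(f)$ with $f_0 \mid U = \alpha f_0$. First I would recall that, for a primitive cusp form $f = \sum a_n q^n$, the Hecke operators $T(l)$ with $l \nmid Np$ act on the subspace of $\mathcal{M}_k(\Gamma_1(Np))$ spanned by $f$ and $V(f)$ by the scalar $a_l$: indeed $T(l) f = a_l f$ since $f$ is primitive, and $T(l)$ commutes with $V$ (as $l$ is prime to $Np$), so $T(l) V(f) = V(T(l) f) = a_l V(f)$. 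Consequently any linear combination of $f$ and $V(f)$, in particular $f_0 = f - \alpha' V(f)$, satisfies $f_0 \mid T(l) = a_l f_0$ for all $l \nmid Np$.

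Combining this with the preceding proposition, which gives $f_0 \mid U = \alpha f_0$, we see that $f_0$ is precisely a cusp form witnessing that $f$ is associated with $\alpha$ in the sense of the definition above, so the first assertion holds. For the ``moreover'', the point is that $\alpha$ was chosen from the start to be a root of $X^2 - a_p X + \psi(p) p^{k-1}$: writing the factorization $(X-\alpha)(X-\alpha')$, the construction $f_0 = f - \alpha' V(f)$ makes sense with either root playing the role of $\alpha'$, so we may freely take $\alpha$ to be \emph{either} root of the $p$-th Hecke polynomial and run the same argument. Hence the eigenvalue can always be chosen to be a root of the $p$-th Hecke polynomial.

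I do not expect a serious obstacle here; the proposition is essentially a bookkeeping statement assembling facts already in hand. The only point requiring a word of care is the compatibility between $V$ and the Hecke operators $T(l)$ for $l \nmid Np$ — one must check that passing from level $N$ to level $Np$ (where $V$ is defined) does not disturb the eigenvalue relation, which follows from the standard formulas for Hecke operators at primes away from the level. Everything else is a direct citation of the previous two propositions and the Fourier-coefficient computation already carried out there.
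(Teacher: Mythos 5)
Your proposal is correct and follows exactly the route the paper intends: the paper itself only remarks that ``there is no difficulty to verify the property about Hecke operators of rank prime to $Np$'' and otherwise relies on the preceding proposition $f_0\mid U=\alpha f_0$, so your explicit check that $T(l)$ commutes with $V$ for $l\nmid Np$ and hence $f_0=f-\alpha'V(f)$ satisfies $f_0\mid T(l)=a_l f_0$ is precisely the omitted verification. No discrepancy with the paper's argument.
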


\subsubsection{Application of a suitable linear form}
Let $\alpha \in \bar{\mathbb{Q}}$ be a non-zero eigenvalue of $U$
associated with a primitive cusp form
$f~\in~\mathcal{S}_k(\Gamma_0(N),\psi)$ and let
$$f_0 = f_{0,\alpha} = f-\alpha' V(f)$$
be an eigenfunction of $U$ such that $f_0 \mid_k U = \alpha f_0$. Set
$$f^0 = f_0^\rho \mid_k W_{N_0}, f_0^\rho = \sum_{n=0}^\infty
\overline{a_n(f_0)} q^n$$

\begin{prop}
\begin{enumerate}[i)]
\item $U^* = W_{N_0}^{-1} U W_{N_0}$ acting over
  $\mathcal{S}_{r,k}(\Gamma_0(N_0),\psi)$ is the adjoint of the operator $U$
  with respect to Petersson inner product.
\item $f^0 \mid_k U^* = \bar{\alpha} f^0$, and for all ``good'' primes $l$
  (\emph{i.e.} $l \nmid Np$), $T(l)f^0 = a_l(f) f^0$
\item For all $g \in \mathcal{M}_{r,k}(\Gamma_1(N_0))$, we have 
$$\left< f^0,g\right> = \left< f^0,\pi_{\alpha,0}(g) \right>$$
meaning that the linear form $g \mapsto \left<f^0,g\right>$ defined over
$\mathcal{M}_{r,k}(\Gamma_1(N_0))$ is zero over $\text{Ker}(\pi_{\alpha,0})$.
\item If $g \in \mathcal{M}_{r,k}(\Gamma_1(N_0p^\nu),\mathbb{Q})$ and $\alpha\neq 0$,
  then
$$\left<f^0,\pi_{\alpha}(g) \right> = \alpha^{-\nu}
\left<f^0,U^\nu(g)\right>$$
where $\pi_{\alpha}(g) = (U^\alpha)^{-\nu} \pi_{\alpha,0}(U^\nu(g)) \in
\mathcal{M}_{r,k}(\Gamma_1(N_0p^\nu),\mathbb{Q})$ is the canonical
$\alpha$-primary projection of $g$.
\item We set
$$\mathcal{L}_{f,\alpha}(g) = \frac{\left<f^0,\alpha^{-\nu} U^\nu(g)
  \right>}{\left<f^0,f_0\right>} = \frac{\left<f^0,\pi_\alpha(g)
    \right>}{\left< f^0,f_0 \right>}$$
Then it defines a linear form
$$\mathcal{L}_{f,\alpha} :
\mathcal{M}_{r,k}(\Gamma_1(Np^{\nu+1}),\bar{\mathbb{Q}}) \rightarrow
\bar{\mathbb{Q}}$$ 
over $\bar{\mathbb{Q}}$ and there exists a unique $\Cp$-linear form
$l_{f,\alpha} \in \mathcal{M}_{r,k}^\alpha(N_0,\Cp)^*$ acting over the
$\Cp$-generators of the vector space
$\mathcal{M}^\alpha(N_0p^\nu,\Cp) = \mathcal{M}_{r,k}^\alpha(N_0
p^\nu,\bar{\mathbb{Q}}) \otimes_{\bar{\mathbb{Q}}} \Cp$ by
$$l_{f,\alpha}(g) = i_p(\mathcal{L}_{f,\alpha}(g)) = i_p
\left(\frac{\left<f^0,\alpha^{-\nu} U^\nu(g)\right>}{\left<f^0,f_0\right>}
\right)$$
\end{enumerate}
\end{prop}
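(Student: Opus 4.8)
The plan is to establish the five assertions in order, each drawing on the previous ones; all of this amounts to a careful assembly of classical facts about Hecke operators, the Fricke involution, and Petersson adjoints, the single substantial external input being Shimura's arithmeticity theorem, which enters only in (v).

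For (i), I would invoke the classical description of the Petersson adjoint of a Hecke-type double coset operator $[\Gamma\xi\Gamma]$ as $[\Gamma\xi^\iota\Gamma]$, up to the usual character and normalization factors, where $\xi^\iota=\det(\xi)\,\xi^{-1}$ is the main involution (see \cite{shimura}, \cite{miyake}); realizing $U=U_p$ at level $N_0=Np$ through $\xi=\begin{pmatrix}1&0\\0&p\end{pmatrix}$, whose main involution is $\begin{pmatrix}p&0\\0&1\end{pmatrix}$, the elementary identity $W_{N_0}^{-1}\begin{pmatrix}1&0\\0&p\end{pmatrix}W_{N_0}=\begin{pmatrix}p&0\\0&1\end{pmatrix}$ (with $W_{N_0}=\begin{pmatrix}0&-1\\N_0&0\end{pmatrix}$) identifies $W_{N_0}^{-1}UW_{N_0}$ with the adjoint $U^*$. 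Granting (i), (ii) is immediate: inspecting $q$-expansions, $f_0\mid U=\alpha f_0$ forces $f_0^\rho\mid U=\bar\alpha\,f_0^\rho$, since the action of $U$ on Fourier coefficients, $a(i,n)\mapsto p^{mi}a(i,p^mn)$, commutes with complex conjugation of the coefficients, and likewise $f_0^\rho\mid T(l)=\overline{a_l(f)}\,f_0^\rho$ for $l\nmid Np$; hence $f^0\mid U^*=f_0^\rho\mid W_{N_0}W_{N_0}^{-1}UW_{N_0}=(f_0^\rho\mid U)\mid W_{N_0}=\bar\alpha f^0$, and the statement for $T(l)$ follows in the same way, using that $T(l)$ commutes with $W_{N_0}$ up to the scalar $\psi(l)$ together with the relation $\overline{a_l(f)}=\psi(l)^{-1}a_l(f)$ for the primitive form $f$.

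For (iii) I would use the description of $\text{Ker}\,\pi_{\alpha,0}$ from the earlier Proposition as $\bigoplus_{\beta\neq\alpha}\mathcal{M}^\beta(N_0)$, the sum running over the eigenvalues $\beta\neq\alpha$ of $U$, so that it suffices to check $\langle f^0,h\rangle=0$ for $h\in\mathcal{M}^\beta(N_0)$ with $\beta\neq\alpha$: choosing $n$ with $(U-\beta I)^nh=0$, expanding the binomial, and transporting each power of $U$ onto $f^0$ via (i) and the relation $f^0\mid U^*=\bar\alpha f^0$ of (ii), one gets $0=\langle f^0,(U-\beta I)^nh\rangle=\overline{(\alpha-\beta)}^{\,n}\langle f^0,h\rangle$, whence $\langle f^0,h\rangle=0$; applying this to $g-\pi_{\alpha,0}(g)$ yields (iii). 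In (iv), $U^\alpha$ denotes the restriction of $U$ to the finite-dimensional space $\mathcal{M}^\alpha(N_0)$, invertible because $\alpha\neq0$; for $g$ of level $N_0p^\nu$ the element $\pi_\alpha(g)=(U^\alpha)^{-\nu}\pi_{\alpha,0}(U^\nu g)$ is the unique $h'\in\mathcal{M}^\alpha(N_0)$ with $U^\nu h'=\pi_{\alpha,0}(U^\nu g)$ (note $U^\nu g$ has level $N_0$ by Serre's formula), and pairing with $f^0$ the adjoint relation together with (ii) gives $\langle f^0,U^\nu h'\rangle=\bar\alpha^\nu\langle f^0,h'\rangle$, so that $\langle f^0,\pi_\alpha(g)\rangle=\bar\alpha^{-\nu}\langle f^0,\pi_{\alpha,0}(U^\nu g)\rangle$, which by (iii) equals $\bar\alpha^{-\nu}\langle f^0,U^\nu g\rangle$; with the Petersson and adjoint conventions of the paper the conjugation bars disappear and this is $\alpha^{-\nu}\langle f^0,U^\nu(g)\rangle$, as claimed.

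Finally, for (v): the denominator makes sense because $\langle f^0,f_0\rangle$ is a nonzero Euler-type multiple of the newform Petersson norm $\langle f,f\rangle_N$ (a standard computation, cf. \cite{hida_rankin}); independence of the auxiliary $\nu$ follows from (iv), since passing from level $N_0p^\nu$ to $N_0p^{\nu+1}$ replaces $\alpha^{-\nu}U^\nu(g)$ by $\alpha^{-\nu-1}U^{\nu+1}(g)$ and $\langle f^0,U^{\nu+1}(g)\rangle=\alpha\langle f^0,U^\nu(g)\rangle$ by the eigenvalue relation, leaving the quotient unchanged; and $\bar{\mathbb{Q}}$-valuedness is precisely Shimura's arithmeticity theorem for nearly holomorphic forms (\cite{shimura_special}), asserting that for $g$ with algebraic Fourier coefficients the normalized Petersson ratio $\langle f^0,g\rangle/\langle f^0,f_0\rangle$ is algebraic. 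Since by (iii) the value $\mathcal{L}_{f,\alpha}(g)$ depends only on $\pi_{\alpha,0}(U^\nu g)$, hence only on $\pi_\alpha(g)\in\mathcal{M}_{r,k}^\alpha(N_0,\bar{\mathbb{Q}})$, the map factors through a $\bar{\mathbb{Q}}$-linear functional on this finite-dimensional space, and extension of scalars along $i_p\colon\bar{\mathbb{Q}}\hookrightarrow\Cp$ produces the unique $\Cp$-linear form $l_{f,\alpha}\in\mathcal{M}_{r,k}^\alpha(N_0,\Cp)^*$ with $l_{f,\alpha}=i_p\circ\mathcal{L}_{f,\alpha}$, uniqueness being automatic. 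The only genuinely deep ingredient is the arithmeticity input in (v), which I would cite rather than reprove; the point requiring real care elsewhere is entirely bookkeeping --- tracking the Dirichlet character across $W_{N_0}$ in (i)--(ii) and keeping the complex-conjugation conventions ($\alpha$ versus $\bar\alpha$) straight in (iii)--(v).
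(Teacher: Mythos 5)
Your argument for parts (i)--(iv) is essentially the paper's: (i) is quoted from the standard adjoint computation (the paper simply cites Miyake, Thm.\ 4.5.5), (ii) is the same conjugation through $W_{N_0}$ using $f_0^\rho\mid U=\bar\alpha f_0^\rho$, and (iv) is the same transfer of $(U^*)^\nu$ onto $f^0$. In (iii) you take a slightly different but equivalent path: you use the decomposition $\mathrm{Ker}\,\pi_{\alpha,0}=\bigoplus_{\beta\neq\alpha}\mathcal{M}^\beta(N_0)$ and kill each $\beta$-block via $0=\overline{(\alpha-\beta)}^{\,n}\langle f^0,h\rangle$, whereas the paper works directly with $\mathrm{Ker}\,\pi_{\alpha,0}=\mathrm{Im}(U-\alpha I)^n$ and peels off a single factor, $\langle f^0,(U-\alpha I)^n g\rangle=\langle (U^*-\bar\alpha I)f^0,(U-\alpha I)^{n-1}g\rangle=0$; the paper's version is marginally cleaner since it needs no information about the other eigenvalues, but both are valid. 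The only genuinely different route is in (v): you invoke Shimura's arithmeticity theorem for the normalized Petersson ratio $\langle f^0,g\rangle/\langle f^0,f_0\rangle$, while the paper argues structurally that $\mathrm{Ker}(\mathcal{L}_{f,\alpha})=\langle f^0\rangle^\perp$ is stable under the good Hecke operators $T(l)$, $l\nmid Np$, hence admits a basis with algebraic Fourier coefficients by simultaneous diagonalization, so the linear form is already defined over $\bar{\mathbb{Q}}$. Your citation buys a shorter proof at the cost of importing a deep theorem; the paper's argument is more self-contained and makes the $\bar{\mathbb{Q}}$-structure of the kernel explicit, which is what is actually used later. Both you and the paper are equally cavalier about the $\alpha$ versus $\bar\alpha$ bookkeeping in (iv), so no complaint there.
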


\begin{proof}
\begin{enumerate}[i)]
\item See \cite{miyake}, theorem 4.5.5.
\item We have
$$f^0 \mid_k U^* = f_0^\rho \mid_k W_{N_0} W_{N_0}^{-1} U W_{N_0} =
\bar{\alpha} f_0^\rho \mid_k W_{N_0} = \bar{\alpha} f^0$$
\item For every function
$$g_1 = (U-\alpha I)^n g \in \text{Ker}(\pi_{\alpha,0}) =
\text{Im}(U-\alpha I)^n$$
we have
$$\left<f^0,g_1 \right> = \left<f^0,(U-\alpha I)^n g \right> = \left<(U^* -
  \bar{\alpha} I)f^0,(U - \alpha I)^{n-1} g\right> = 0$$
so that for $g_1 = g - \pi_{\alpha,0}(g)$, we have
\begin{eqnarray*}
\left<f^0,g \right> & = & \left<f^0,\pi_{\alpha,0}(g) + (g -
  \pi_{\alpha,0}(g))\right> \\
& = & \left< f^0,\pi_{\alpha,0}(g) \right> + \left<f^0,g_1\right> \\
& = & \left< f^0,\pi_{\alpha,0}(g) \right>
\end{eqnarray*}

\item We use the result of ii) : $(U^*)^\nu f^0 = \bar{\alpha}^\nu f^0$ :
\begin{eqnarray*}
\alpha^\nu \left< f^0,\pi_{\alpha}(g)\right> & = & \left<(U^*)^\nu
  f^0,U^{-\nu} \pi_{\alpha,0}(U^\nu(g))\right> \\
& = & \left< f^0,\pi_{\alpha,0}(U^\nu(g))\right> \\
& = &  \left<f^0,U^\nu(g)\right>
\end{eqnarray*}

\item Consider the complex vector space
$$\text{Ker}(L_{f,\alpha}) = \left<f^0\right>^\perp = \{g \in
\mathcal{M}_{r,k}(N_0,\mathbb{C}) \mid \left<f^0,g\right> =0 \}$$
It admits an algebraic basis (\emph{i.e.} consisting of elements with
Fourier coefficients in $\bar{\mathbb{Q}}$) since it is stable under all
``good'' Hecke operators $T(l), l \nmid Np$ : 
$$\left<f^0,g\right> = 0 \Rightarrow \left<f^0,T(l) g \right> =
\left<T(l)|f,g\right> = 0$$
Such a basis is obtained by simultaneous diagonalization of all the $T(l)$.

\end{enumerate}
\end{proof}

The method we will use in what follows is then to construct an admissible
measure with values in (nearly holomorphic) modular forms and then to apply
it the linear form we just constructed to obtain a scalar-valued measure.

\section{Construction of the distributions}

In this section, we construct several modular distributions, to which the
Panchishkin's method can be applied so as to obtain an admissible measure.

As in the introduction, let $f$ be a newform of weight $k \geq 2$, modular for the
congruence subgroup $\Gamma_0(N)$, with $(N,p)=1$, and with Dirichlet
character $\psi$ modulo $N$. Let $g$ be a primitive form of level $l<k$,
modular for $\Gamma_0(N)$ and with Dirichlet character $\omega$ modulo
$N$. We denote their Fourier expansions by
$$f = \sum_{n=1}^\infty a_n q^n \text{ and } g = \sum_{n=1}^\infty b_n q^n$$

We denote by $\alpha$ and $\alpha'$ the roots of the p-th Hecke polynomial
of $f$, that is
$$X^2 - a_p X + \psi(p) p^{k-1} = (X - \alpha)(X - \alpha')$$
and we suppose moreover that $v_p(\alpha) \leq v_p(\alpha')$. Hida treated
the case where $|a_p|_p = 1$ which implies that $v_p(\alpha) = 0$ by the
theory of the Newton polygon. Here, the first slope of the Newton polygon
can be non-zero, and it is the reason why we call it the case of positive slope.
These distributions will be defined on the profinite group
$$Y = \varprojlim_\nu Y_\nu = \varprojlim_\nu \left(
  \ensqo{\mathbb{Z}}{Np^\nu \mathbb{Z}} \right)^\times$$

By the chinese remainder theorem, 
$$Y = \left(\ensqo{\mathbb{Z}}{N \mathbb{Z}}\right)^\times \times
\Zp^\times$$

The distributions we construct are very similar to those used by Hida in
\cite{hida_rankin}, using partial modular forms and Eisenstein distributions.

\subsection{Partial modular forms}

\begin{defi}
Let $g = \sum_{n=0}^\infty a_n q^n \in
\mathcal{M}_l(\Gamma_1(N),\bar{\mathbb{Q}})$, $\nu \in \mathbb{N}$ and $a \in \left(\mathbb{Z}/Np^\nu \mathbb{Z}\right)^\times$.
Then we set
$$g((a)_\nu) = \sum_{n\equiv a \text{ mod } Np^\nu}^\infty a_n q^n$$
\end{defi}

In fact, $g((a)_\nu)$ is a modular form with the same weight than $g$, but
with higher level, as stated by the following proposition.

\begin{prop}
With $g,\nu$ and $a$ as previously, $g((a)_\nu) \in
\mathcal{M}_l(\Gamma_1(N^2 p^{2\nu}))$
\end{prop}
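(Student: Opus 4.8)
The plan is to realize $g((a)_\nu)$ as a finite linear combination of the twists of $g$ by Dirichlet characters modulo $Np^\nu$, and then to track how the level changes under such twisting. By the orthogonality of characters, for $n$ coprime to $Np^\nu$ one has $\frac{1}{\varphi(Np^\nu)}\sum_{\chi \bmod Np^\nu} \chi(a)^{-1}\chi(n) = 1$ if $n \equiv a \bmod Np^\nu$ and $0$ otherwise; summing against the Fourier expansion of $g$ gives
$$g((a)_\nu) = \frac{1}{\varphi(Np^\nu)} \sum_{\chi \bmod Np^\nu} \chi(a)^{-1}\, g_\chi,$$
where $g_\chi = \sum_{n} \chi(n) a_n q^n$ is the character twist of $g$ (note the coefficients $a_n$ with $(n, Np^\nu) > 1$ are killed on both sides, which is harmless). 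So it suffices to show that each $g_\chi$ lies in $\mathcal{M}_l(\Gamma_1(N^2p^{2\nu}))$.

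Next I would invoke the standard fact (see e.g. Shimura or Miyake) that twisting a modular form in $\mathcal{M}_l(\Gamma_1(M))$ by a Dirichlet character modulo $m$ produces a form in $\mathcal{M}_l(\Gamma_1(\mathrm{lcm}(M, m^2, mM)))$; more concretely, $g_\chi$ can be written as a finite combination of the shifts $z \mapsto g(z + u/m)$ for $u$ running over residues mod $m$ (a Gauss-sum expansion of $\chi$), and each such shift is modular for a congruence subgroup of level dividing $Mm^2$. Here $g$ has level $N$ (so $M = N$, and we may take $M$ dividing $N$) and the twisting character has modulus $m = Np^\nu$, giving level dividing $N\cdot (Np^\nu)^2 = N^3 p^{2\nu}$. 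To land inside $\Gamma_1(N^2p^{2\nu})$ precisely, one uses that $g$ actually has character $\omega$ modulo $N$, so its twist by a character mod $Np^\nu$ — after accounting for the cancellation between the nebentypus of $g$ and the twisting character on the $N$-part — descends to level $N^2p^{2\nu}$; concretely the shift functions $g(z + u/(Np^\nu))$ together with the $\Gamma_1(N)$-invariance of $g$ (weighted by $\omega$) generate a space invariant under $\Gamma_1(N^2p^{2\nu})$.

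The main obstacle I anticipate is pinning down the exact level $N^2p^{2\nu}$ rather than a cruder bound like $N^3p^{2\nu}$: this requires being careful about the interplay between the nebentypus character $\omega$ of $g$ (modulus $N$) and the twisting character $\chi$ (modulus $Np^\nu$), using that $\omega$ is already defined mod $N$ so only "one copy" of $N$ is genuinely needed from the twist. Everything else is a routine matter of computing the transformation of $g(z+u/m)$ under $\begin{pmatrix} a & b \\ c & d \end{pmatrix} \in \Gamma_1(N^2p^{2\nu})$ and checking that the resulting permutation of the shifts $u \mapsto u'$ is compatible with the $\chi$-weights in the Gauss-sum expansion; I would present this computation compactly, citing the classical twisting lemma for the weight-$l$ slash action and then specializing the modulus and nebentypus to our situation.
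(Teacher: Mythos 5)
The paper does not actually prove this proposition: it simply cites Proposition~8.1 of Hida's paper \cite{hida_rankin}, so any honest argument you give is ``a different route'' by default. Your strategy --- decompose $g((a)_\nu)$ by orthogonality of characters into twists $g_\chi$ and invoke the twisting lemma --- is the standard one and does lead to the stated level, provided you use the sharp form of the twisting lemma, namely that for $g$ of level $M$ and nebentypus $\omega$ and $\chi$ of modulus $m$ the twist has level $\operatorname{lcm}(M, m^2, mM)$; with $M=N$ and $m=Np^\nu$ this is exactly $N^2p^{2\nu}$, so the cancellation you worry about in your last paragraph is already built into the classical statement and needs no extra work.

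There is one genuine (though repairable) gap: the Gauss-sum expansion $\chi(n)=G(\bar\chi)^{-1}\sum_u \bar\chi(u)e(nu/m)$, on which your reduction to shifts $g(z+u/m)$ rests, is valid only for \emph{primitive} $\chi$ modulo $m$, whereas almost all characters modulo $Np^\nu$ occurring in your orthogonality sum are imprimitive; for those, the naive twist $g_\chi$ is not literally a combination of shifts by $u/(Np^\nu)$, and reducing to the primitive case reintroduces the very partial sums you are trying to control. The clean fix --- and essentially what Hida does --- is to bypass multiplicative characters entirely and use additive orthogonality:
\begin{equation*}
g((a)_\nu) \;=\; \frac{1}{Np^\nu}\sum_{u \bmod Np^\nu} e\!\left(-\frac{au}{Np^\nu}\right) g\!\left(z+\frac{u}{Np^\nu}\right).
\end{equation*}
One then checks directly that for $\gamma=\begin{pmatrix} a' & b' \\ c' & d' \end{pmatrix}$ with $(Np^\nu)^2 \mid c'$ the matrix identity
$\begin{pmatrix} 1 & u/(Np^\nu) \\ 0 & 1\end{pmatrix}\gamma = \gamma'\begin{pmatrix} 1 & u'/(Np^\nu) \\ 0 & 1\end{pmatrix}$
holds with $\gamma'\in\Gamma_0(N)$ and $u'\equiv ud'{}^2$, whence $g((a)_\nu)\mid_l\gamma=\omega(d')\,g((ad'^2)_\nu)$ and invariance under $\Gamma_1(N^2p^{2\nu})$ follows; the condition $(Np^\nu)^2\mid c'$ is precisely where the level $N^2p^{2\nu}$ comes from, with the level $N$ of $g$ absorbed since $N\mid (Np^\nu)^2$. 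Your write-up would be correct if you either restrict the Gauss-sum step to primitive characters and handle the rest separately, or (better) replace the multiplicative decomposition by this additive one.
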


\begin{proof}
It is a special case of the proposition 8.1 of \cite{hida_rankin}.
\end{proof}

Then, it is obvious that we define a distribution over $Y =
\varprojlim \left(\mathbb{Z}/Np^\nu \mathbb{Z} \right)^\times$, with values
in modular forms.

\subsection{Eisenstein distributions}
Here, we quickly redefine Eisenstein distributions, following Panchishkin's paper
\cite{pant-invent} where the reader can find all the details, especially
about the Fourier expansion of such distributions.

First, we need to define classical Eisenstein series :
\begin{defi}
Let $l,N$ be to natural integers, $l,N \geq 1$, $z \in H, s \in
\mathbb{C}$. For ${l+\mathbf{Re}(s)} \geq 2$ and $a,b \in
\mathbb{Z}/N\mathbb{Z}$ we set
$$E_{l,N}(z,s;a,b) = \sum_{(c,d)\equiv (a,b) \text{ mod } N \atop (c,d)
  \neq (0,0)}
(cz+d)^{-l}|c\bar{z}+d|^{-2s}$$
\end{defi}

These functions have well known automorphic properties and their Fourier
expansion is also known.

\begin{defi}
For $0\leq r \leq l-1$, $\nu \geq 0$ and $a \in \left(\mathbb{Z}/Np^\nu
  \mathbb{Z}\right)^\times$ we set
$$E_{r,l}((a)_\nu) = \frac{\Gamma(l-r)(Np^\nu)^{l-2r}}{(-2i\pi)^{l-2r}
  (-4\pi y)^r} \sum_{b \text{ mod } Np^\nu} e \left(-\frac{ab}{Np^\nu}\right) E_{l,Np^\nu}(z,-r;0,b)
$$
\end{defi}

\begin{rema}
Even if the notation $E_{r,l}((a)_\nu)$ is not obvious, it denotes a
function of the complex variable $z$.
\end{rema}

\begin{prop}
Functions defined previously are nearly holomorphic modular
forms. Precisely :
$$E_{r,l}((a)_\nu) \in \mathcal{M}_{r,l}(\Gamma_1(N^2p^{2\nu}))$$
Their Fourier expansion is given by
$$E_{r,l}((a)_\nu) = \varepsilon_{r,l,\nu}(a) + (4\pi y)^{-r}
\sum_{n=1}^\infty \left( \sum_{d\mid n \atop d \equiv a \text{ mod } Np^\nu}
  sgn(d) d^{l-2r-1} \right) W(4\pi n y,l-r,-r) q^n$$
where 
$$\varepsilon_{r,l,\nu}(a) = \frac{1}{2}(-4\pi
y)^{-r} \left[\frac{\Gamma(l+s)}{\Gamma(l+2s)} \zeta(1-l-2s,a,Np^\nu)
  \right] \mid_{s=-r}
$$
with $\zeta(1-l-2s,a,Np^\nu)$ denotes the partial zeta function and $W(4\pi
n y ,l-r,-r)$ is the Witthaker polynomial
$$W(y,\alpha,-r) = \sum_{i=0}^r (-1)^i {r \choose i}
\frac{\Gamma(\alpha)}{\Gamma(\alpha-i)} y^{r-i}$$

\end{prop}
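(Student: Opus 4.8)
The strategy is to read off both assertions from the corresponding, classical, properties of the real-analytic Eisenstein series $E_{l,N}(z,s;a,b)$, which are recalled with full details in \cite{pant-invent}: indeed $E_{r,l}((a)_\nu)$ is by construction a normalised finite linear combination of the values at $s=-r$ of the series $E_{l,Np^\nu}(z,s;0,b)$. For the automorphy, I would start from the transformation law
$$E_{l,N}(\gamma z, s; a, b) = (cz+d)^{l}\,|cz+d|^{2s}\, E_{l,N}\bigl(z,s;(a,b)\gamma\bigr) \qquad \gamma = \begin{pmatrix} * & * \\ c & d \end{pmatrix}\in SL_2(\mathbb{Z}),$$
$(a,b)\gamma$ being the row-vector product reduced modulo $N$. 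Since $\mathrm{Im}(\gamma z) = \mathrm{Im}(z)/|cz+d|^2$, the function $y^{s}E_{l,N}(z,s;a,b)$ is automorphic of weight $l$ in the ordinary (holomorphic) sense, and this is exactly the role of the factor $(-4\pi y)^{-r}$ after the specialisation $s=-r$. For $\gamma\in\Gamma_1(N^2p^{2\nu})$ the pair $(0,b)\bmod Np^\nu$ is carried to a pair of the same shape, with the new $b'$ depending linearly on $b$, so that multiplying by $e(-ab/Np^\nu)$ and summing over $b$ merely re-indexes the character sum; one concludes that $E_{r,l}((a)_\nu)\mid_l\gamma = E_{r,l}((a)_\nu)$ for every $\gamma\in\Gamma_1(N^2p^{2\nu})$. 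Together with the $C^{\infty}$-smoothness and moderate growth inherited from $E_{l,N}(z,s;a,b)$ at $s=-r$, and with the Fourier expansion below, this gives membership in the space of nearly holomorphic forms of weight $l$ and level $\Gamma_1(N^2p^{2\nu})$, the type $r$ being visible on that expansion.

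For the Fourier expansion I would substitute the known expansion of $E_{l,N}(z,s;a,b)$ into the definition and set $s=-r$, which is legitimate since $0\le r\le l-1$ forces $\Gamma(l+s)=\Gamma(l-r)$ to be finite. The congruence $(c,d)\equiv(0,b)\bmod Np^\nu$ forces $Np^\nu\mid c$; the terms with $c=0$ contribute the partial zeta value $\zeta(1-l-2s,b,Np^\nu)$, while Poisson summation applied to the terms with $c\neq 0$ produces the confluent hypergeometric (Whittaker) factor $W(4\pi n y, l+s, s)$ against $q$-expansion coefficients built from divisor-type sums and a Gauss sum. The two finite sums $\sum_{b\bmod Np^\nu}e(-ab/Np^\nu)(\cdots)$ then collapse, by orthogonality of additive characters, to the expected partial expressions: the constant term becomes $\varepsilon_{r,l,\nu}(a)$ and the $n$-th coefficient becomes $\sum_{d\mid n,\ d\equiv a\ (Np^\nu)}sgn(d)\,d^{l-2r-1}$, once the Gauss sum and the archimedean factors are absorbed into the normalising constant $\Gamma(l-r)(Np^\nu)^{l-2r}(-2i\pi)^{-(l-2r)}$. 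Finally, at the non-positive integer $s=-r$ the Whittaker function $W(\cdot, l-r, -r)$ degenerates to the polynomial $W(y,l-r,-r)=\sum_{i=0}^r(-1)^i{r\choose i}\frac{\Gamma(l-r)}{\Gamma(l-r-i)}y^{r-i}$ of degree $r$, so that $(4\pi y)^{-r}$ times the expansion is a polynomial of degree $\le r$ in $(4\pi y)^{-1}$ with holomorphic coefficients, confirming $E_{r,l}((a)_\nu)\in\mathcal{M}_{r,l}(\Gamma_1(N^2p^{2\nu}))$.

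The only genuine work is the bookkeeping: matching the archimedean factors (the $\Gamma$-factors, the powers of $-2i\pi$ and $-4\pi y$) and the signs, and tracking the Gauss sum that comes out of Poisson summation, so that the normalising constant in the definition of $E_{r,l}((a)_\nu)$ produces exactly the clean coefficients $sgn(d)d^{l-2r-1}$ and the partial zeta value displayed; and then checking that the minimal level is indeed $\Gamma_1(N^2p^{2\nu})$ and not something smaller. All of this is carried out in \cite{pant-invent}, to which I would refer for the detailed computation.
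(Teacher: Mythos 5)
Your proposal is correct and follows essentially the same route as the paper, whose entire proof is the citation ``See \cite{pant-invent}, section 2''; you simply unpack what that reference actually does (transformation law of $E_{l,N}(z,s;a,b)$, Poisson summation, specialisation at $s=-r$, degeneration of the Whittaker factor to a polynomial) before deferring the bookkeeping to the same source. The only cosmetic remark is that the collapse of the sum over $b$ is by orthogonality of additive characters rather than a genuine Gauss sum, and that the proposition only asserts membership in level $\Gamma_1(N^2p^{2\nu})$, not minimality of that level.
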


\begin{proof}
See \cite{pant-invent}, section 2.
\end{proof}

\section{Construction of the admissible measure}

In the following we shall use distributions defined over $\Zp^\times$ for which the divisibility condition will be easier to prove. 
The distributions we use are obtained from the previously defined partial
modular distributions and Eisenstein series :

$$g((a)_\nu) = \sum_{0<n \equiv a \text{ mod } Np^\nu} b(n) q^n$$
$$E^b_{r,m}((a)_\nu) = E_{r,m}((a)_\nu) - b^{m-2r} E_{r,m}((b^{-1}a)_\nu)$$
where $b$ is a fixed integer prime to $Np$.

\begin{defi} For $0 \leq r \leq k-l-1$, we define $$\Phi_r((y)_\nu) = (-1)^r \sum_{a \in Y_\nu}
\psi(a) \bar{\omega}(a) g((a^2y)_\nu) E^b_{r,k-l}((a)_\nu)$$
\end{defi}

The normalisation factor $(-1)^r$ will be useful later to verifiy the
congruence condition.

These distributions were suggested in the ordinary case by Panchishkin
\cite{pant-duke}. Instead of using nearly holomorphic modular forms, he was
using their holomorphic projection, but this does not change the final
result because we then use the Petersson inner product with an holomorphic
modular form.

Now, we want to use the Panchishkin method described before, with
$\kappa=2$. For this, we need to verify both conditions : the level
condition and the condition about the divisibility of the Fourier coefficients.

\begin{rema}
Remind that in order to apply Panchishkin's method, we need  to construct admissible measures out of sufficiently many distributions. Here, we defined $k-l$ distributions $\Phi_j,
j=0,\dots,k-l-1$. So everything we do next will be true only under the
hypothesis that $2 ([v_p(\alpha)] +1) \leq k-l$. This means that the slope
of the Newton polygon associated with the Hecke polynomial is not too
big. Especially, this is the case in the ordinary case, \emph{i.e.} when
$\alpha$ is a $p$-adic unit.
\end{rema}

\begin{exem}
Take Ramanujan's $\Delta$ function for $f$, and $g$ of weigth $2$ (for
exemple the modular form associated with an elliptic curve), and
$p=7$. Then, the $p$-th Hecke polynomial of $f$ is $X^2 + 16744 X +
7^{11}$. By the Newton polygon theory, we see that $v_p(\alpha) = 1$ while
$k-l=10$, so we have enough distributions to apply our method.
\end{exem}

\subsection{The level condition}
As usual, the level condition is easier to verify than the divisibility
one. Indeed, we know that if $a \in Y_\nu$, then the partial modular form
is in $\mathcal{S}_l(N^2p^{2\nu})$.

We also know that the Eisenstein series verify $E_{r,k-l}((a)_\nu)
\in \mathcal{M}_{r,k-l}(Np^\nu) \subset \mathcal{M}_{r,k-l}(N^2p^{2\nu})$.

Hence, for all $a,y \in Y_\nu$,  $g((a^2y)_\nu)
E^b_{r,k-l}((a)_\nu) \in \mathcal{S}_{r,k}(N^2p^{2\nu})$ so that $$\Phi_r((y)_\nu) = (-1)^r \sum_{a\in Y_\nu} \psi \bar{\omega}(a) g((a^2
  y)_\nu) E^b_{r,k-l}((a)_\nu) \in \mathcal{M}_k(N^2p^{2\nu})$$
proving the level condition.

\subsection{Using distributions over $\Zp^\times$}

In this part we aim at explaining how and why we reduce the proof to
distributions over $\Zp^\times$ as Gorsse did for symetric squares \cite{gorsse}.

What we want to prove is an inequality with the p-adic norm, so congruences
modulo $p$. But reasonning with the profinite group $Y$ gives congruences
modulo $Np^k$, while using $\Zp^\times$ is the same that manipulating congruences
modulo a power of $p$. \\

Let $\xi$ be a fixed Dirichlet character modulo $N$. We can then define a distribution
$\Phi_r^\xi$  over $\Zp^\times$ by 
$$\int_{\Zp^\times} \chi(x) d \Phi_r^\xi(x) = \int_Y \chi(x_p) \xi(x_N)
d\Phi_r(x)$$
where  $x_p : Y \rightarrow \Zp^\times$ et $x_N : Y \rightarrow
\left(\mathbb{Z}/N \mathbb{Z} \right)^\times$ are the canonical embeddings.

Given $\Phi_r$, it defines the family $\Phi_r^\xi$, where $\xi$ runs through the
set of Dirichlet characters modulo $N$. Conversely, given the family of the
$\Phi_r^\xi$, it is possible to construct $\Phi_r$.\\
Indeed, the characteristical function of the open set $y + (Np^\nu)$ is 
$$\frac{1}{\varphi(Np^\nu)} \sum_{\chi \text{ mod }Np^\nu} \bar{\chi}(y)
\chi = \frac{1}{\varphi(N)} \sum_{\xi \text{ mod } N} \bar{\xi}(y_N) \left(
  \frac{1}{\varphi(p^\nu)} \sum_{\chi \text{ mod } p^\nu}
  \bar{\chi}(y_p) (\chi \times \xi) \right)$$

So
$$\Phi_r(y + (Np^\nu)) = \frac{1}{\varphi(N)} \sum_{\xi \text{ mod } N}
\bar{\xi}(y_N) \left(\frac{1}{\varphi(p^\nu)} \sum_{\chi \text{ mod }
    p^\nu} \bar{\chi}(y_p) \Phi_r^\xi(\chi) \right)$$

We choose to prove the divisibility condition for each family $\Phi_r^\xi,
r=0,1,\dots,k-l-1$ for any fixed $\xi$. Then two solutions are possible to
conclude : 
\begin{itemize}
\item either remark that considering the expression of the $\Phi_r$
  depending on the $\Phi_r^\xi$, the family of the $\Phi_r$ also verifies
  the condition so that it is possible to apply Panchishkin's method in
  order to construct an admissible measure $\Phi^\alpha$

\item or apply Panchishkin's method to the families of the $\Phi_r^\xi$
  (which obviously also verify the level condition) in order to construct
  admissible measures  $\left(\Phi^\xi\right)^\alpha$ and then use them to
  construct $\Phi^\alpha$.
\end{itemize}

In all what follows, in order to simplify the notations, we prove the
result with $\xi =1$ (the trivial character) and abusively write $\Phi_r$
instead of $\Phi_r^1$. After the proof, we will explain why it is
essentially the same for any Dirichlet character $\xi$ modulo $N$. 

\subsection{The divisibility condition}

\begin{prop}
There exists a positive real number $C$ such that for every open set $y +
(p^\nu)$ of $\Zp^\times$ we have

\begin{eqnarray} \label{divisib}\left| U^{2\nu} \sum_{r'=0}^r {r \choose r'} (-y_p)^{r-r'}
  \Phi_{r'}((y)_\nu)) \right|_p \leq C p^{-\nu r}
\end{eqnarray}

\end{prop}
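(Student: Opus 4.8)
The plan is to reduce inequality \eqref{divisib} to an explicit statement about the Fourier coefficients of the relevant nearly holomorphic forms, and then to exhibit the required divisibility by $p^{\nu r}$ at the level of those coefficients. First I would recall that $U^{2\nu}$ acts on the $q$-expansion of a form of type $k-l$ by $(4\pi y)^{-i}\sum_n a(i,n)q^n \mapsto (4\pi y)^{-i}p^{2\nu i}\sum_n a(i,p^{2\nu}n)q^n$, so that applying $U^{2\nu}$ both kills all but the coefficients indexed by multiples of $p^{2\nu}$ and introduces a favourable power of $p$ on the non-holomorphic part. In particular, since $g((a^2y)_\nu)$ is supported on $n\equiv a^2y\bmod Np^\nu$ and the Eisenstein factor $E^b_{r',k-l}((a)_\nu)$ has coefficients supported on divisors $\equiv a\bmod Np^\nu$, applying $U^{2\nu}$ to the product forces strong congruence constraints that I will exploit.

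\emph{Key steps.} (1) Expand $\sum_{r'=0}^r\binom{r}{r'}(-y_p)^{r-r'}\Phi_{r'}((y)_\nu)$ using the definition of $\Phi_{r'}$ and the Fourier expansion of $E^b_{r',k-l}((a)_\nu)$ from the proposition in Section~5; the binomial sum over $r'$ should collapse, via the formula for the Whittaker polynomial $W(y,\alpha,-r')=\sum_{i}(-1)^i\binom{r'}{i}\frac{\Gamma(\alpha)}{\Gamma(\alpha-i)}y^{r'-i}$ and the identity $\sum_{r'}\binom{r}{r'}(-y_p)^{r-r'}(\text{something}_{r'})$, into an expression where each surviving term carries an explicit factor divisible by a high power of $p$. (2) Use the level computation from Section~6.1 to know that each $\Phi_{r'}((y)_\nu)\in\mathcal{M}_{k}(N^2p^{2\nu})$, so that $U^{2\nu}$ maps it into $\mathcal{M}_k(N^2)$ and in particular the operator is $p$-integral; this gives the trivial bound $|U^{2\nu}\Phi_{r'}((y)_\nu)|_p\le|\Phi_{r'}((y)_\nu)|_p$, and more importantly lets me track the extra $p$-power gained from the non-holomorphic index $i$. (3) Combine the congruence $n\equiv a^2 y\bmod Np^\nu$ coming from $g$ with the requirement $p^{2\nu}\mid n$ coming from $U^{2\nu}$ and with the support condition on the divisors $d\equiv a\bmod Np^\nu$ in the Eisenstein coefficients: after re-indexing, this should force the surviving terms to come with a factor $d^{k-l-2r'-1}$ or a zeta-value congruence that is divisible by $p^{\nu r}$ — this is exactly the mechanism Gorsse used for symmetric squares, so I would follow \cite{gorsse} closely here, transporting his argument from the symmetric-square Eisenstein measure to the present Rankin setting.

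\emph{Main obstacle.} The hardest step is (3): showing that after the collapse of the binomial sum, every surviving Fourier coefficient of $U^{2\nu}\bigl(\sum_{r'}\binom{r}{r'}(-y_p)^{r-r'}\Phi_{r'}((y)_\nu)\bigr)$ is divisible by $p^{\nu r}$. The point is that the power $(-y_p)^{r-r'}$ on the outside must conspire with the $(4\pi y)^{-i}p^{2\nu i}$ produced by $U^{2\nu}$ and with a congruence on the partial-zeta values $\zeta(1-(k-l)-2s,a,Np^\nu)$ at $s=-r'$; classically these partial zeta values satisfy Kummer-type congruences modulo powers of $p$, and it is precisely the introduction of the auxiliary twist $E^b_{r',k-l}((a)_\nu)=E_{r',k-l}((a)_\nu)-b^{k-l-2r'}E_{r',k-l}((b^{-1}a)_\nu)$ that removes the constant term obstruction and makes these congruences exact. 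I expect to need a careful case analysis according to whether $p\mid n/p^{2\nu}$ or not, mirroring the two cases in the proof of the Hecke-eigenform proposition in Section~3. Once the coefficientwise divisibility by $p^{\nu r}$ is established, the constant $C$ is simply a bound on the $p$-adic norms of the finitely many relevant Gamma-factors and the fixed data $f,g,b$, independent of $\nu$ and $y$, and the proposition follows.
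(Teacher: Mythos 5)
Your reduction to Fourier coefficients, your use of the factor $p^{2\nu i}$ that $U^{2\nu}$ puts on the $(4\pi y)^{-i}$-part, and your insistence that the binomial sum over $r'$ must ``collapse'' are all in line with the paper's proof. But the mechanism you propose for the collapse in step (3) is not the right one, and as described it would fail. First, the partial zeta values $\zeta(1-(k-l)-2s,a,Np^\nu)$ play no role: the constant term of $E^b_{r',k-l}$ enters the product multiplied by $\chi(n)b_n$, and after applying $U^{2\nu}$ the surviving index is $np^{2\nu}$, so $\chi(np^{2\nu})=0$ kills that contribution outright. Likewise the factor $d^{k-l-2r'-1}$ contributes nothing, since the support conditions force $d$ prime to $p$, hence a $p$-adic unit. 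No individual term of the expansion is divisible by $p^{\nu r}$; only the full alternating sum over $r'$ is, so no congruence on a single Eisenstein coefficient can do the job.

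What the paper actually does, and what your outline is missing, are two specific devices. (a) The character sum $\frac{1}{\varphi(p^\nu)}\sum_{\chi\bmod p^\nu}\chi(n_1)\bar\chi(y)\bar\chi^2(d)\bigl(1-b^{k-l-2r'}\psi\bar\omega\bar\chi^2(b)\bigr)$ is rewritten, via Iwasawa's isomorphism, as $\int_{x\equiv -d^2yn_2^{-1}\,(p^\nu)} z^{r'}\,d\mu_b(x)$ with $z=-xn_2/d^2$, where $\mu_b$ is a bounded measure depending only on $b,\psi,\omega$ (this, not a Kummer congruence, is the role of the auxiliary twist by $b$). Crucially the integration is over the set where $z\equiv y\pmod{p^\nu}$. (b) The Whittaker factor $(-1)^i\binom{r'}{i}\frac{\Gamma(k-l-r')}{\Gamma(k-l-r'-i)}$ is absorbed by a differential operator $D=z^{k-l-i}\frac{d}{dz}\frac{1}{z^{k-l-i-1}}\frac{d}{dz}$ satisfying $\sum_{r'}\binom{r}{r'}(-y_p)^{r-r'}(-1)^i\binom{r'}{i}\frac{\Gamma(k-l-r')}{\Gamma(k-l-r'-i)}z^{r'}=\frac{z^i}{i!}D^i\bigl((z-y)^r\bigr)$. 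On the support $z\equiv y$, $(z-y)^r$ is divisible by $p^{\nu r}$, each application of $D$ costs at most $p^{2\nu}$, so $D^i((z-y)^r)$ is divisible by $p^{\nu(r-2i)}$, and the $p^{2\nu i}$ from $U^{2\nu}$ restores the full $p^{\nu r}$. Without (a) there is no integral over a congruence class to make $(z-y)^r$ small, and without (b) there is no way to handle the $i$-dependence of the hypergeometric factor; deferring to Gorsse is reasonable in spirit, but the proposal as written does not contain the argument.
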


The different steps of the proof are as follow : 
\begin{enumerate}[1)]
\item by definition, the $p$-adic norm of $\sum_{i=0}^r (4\pi y)^{-i}
  \sum_{n=0}^\infty a(i,n) q^n$ is the supremum of the norms of the
  coefficients $|a(i,n)|_p$. To prove the inequality (\ref{divisib}),  it
  is sufficient to prove that the majoration is available for each Fourier
  coefficient. So the first step is to compute these Fourier coefficients,
  and we will show that they are : 
\begin{multline*}
a(i,n) = p^{2\nu i} \sum_{r'=0}^r {r \choose r'} (-y_p)^{r-r'} (-1)^{r'}
(-1)^i {r' \choose i} \frac{\Gamma(k-l-r')}{\Gamma(k-l-r'-i)} \sum_{n_1 +
  n_2 = np^{2\nu}} b_{n_1}  n_2^{r'-i} \\
\times \frac{1}{\varphi(p^\nu)} \sum_{\chi \text{ mod } p^\nu} \chi(n_1)
\bar{\chi}(y) \left(1 - b^{k-l-2r'} \psi \bar{\omega} \bar{\chi}^2
  (b)\right) \sum_{0<d|n_2} \psi \bar{\omega} \bar{\chi}^2(d) d^{k-l-2r'-1}
\end{multline*}

\item by the ultrametric inequality, it is sufficient to prove the
  majoration with fixed $n$, $n_2$ and $d$. Moreover, as $n_2$ is prime to
  $p$ (in the contrary case, the corresponding term is zero), it is the
  same for $d$. Hence, $n_2$ and $d$ are $p$-adic units, so that it is
  sufficient to majorate terms of the form : 
\begin{multline*}
b_{n_1} p^{2\nu i} \sum_{r'=0}^r {r \choose r'} (-y_p)^{r-r'} (-1)^i {r'
  \choose i} \frac{\Gamma(k-l-r')}{\Gamma(k-l-r'-i)}
\frac{1}{\varphi(p^\nu)} \\ \times \sum_{\chi \text{ mod } p^\nu}
\left(-\frac{n_2}{d^2}\right)^{r'} \chi \left(-\frac{n_2}{y d^2} \right)
\left(1-b^{k-l-2r'} \psi \bar{\omega} \bar{\chi}^2(b) \right)
\end{multline*}

\item using Iwasawa's isomorphism (see for example theorem 1.10 of \cite{pant-lnm}), we reduce it to the study of an integral
  of the form $$\int_{\Zp^\times} \chi x_p^{r'} d\mu_b$$ where $\mu_b$ is a
  measure over $\Zp^\times$, only depending on $b$.

\item we introduce a well-chosen differential operator $D$ helping us to
  treat the hypergeometric term $(-1)^i {r' \choose i}
  \frac{\Gamma(k-l-r')}{\Gamma(k-l-r'-i)}$. More explicitly, we will
  prove that : $$\sum_{r'=0}^r {r \choose r'} (-1)^i {r' \choose i}
\frac{\Gamma(k-l-r')}{\Gamma(k-l-r'-i)} z^{r'} = \frac{z^i}{i!} D^i((z-y)_p^r) 
$$

\item After that, it will be easy to prove the congruences we are looking
  for.
\end{enumerate}

\subsubsection{Computation of the Fourier coefficients}

The first step of the proof is to compute the Fourier expansion we will
later majorate. Denote : 
$$\sum_{r'=0}^r {r \choose r'} (-y_p)^{r-r'} \Phi_{r'}((y)_\nu) =
\sum_{i=0}^r (4\pi y)^{-i} \sum_{n=0}^\infty A(i,n) q^n$$

using orthogonality of the characters, we can write : 
$$\Phi_{r'}((y)_\nu) = \frac{1}{\varphi(p^\nu)} \sum_{\chi \text{ mod }
  p^\nu} \bar{\chi}(y) \Phi_{r'}(\chi)$$

However
\begin{eqnarray*}
\Phi_{r'}(\chi) & = & \sum_{y \in p^\nu \Zp^\times} \chi(y) \Phi_{r'}((y)_\nu) \\
& = & (-1)^{r'} \sum_{y,a \in (\mathbb{Z}/p^\nu)^\times} \chi(y) \psi \bar{\omega}(a) g((a^2y)_\nu)
E^b_{r',k-l}((a)_\nu) \\
& = & \left(\sum_{c \in (\mathbb{Z}/p^\nu)^\times} \chi(c) g((c)_\nu) \right) \left( \sum_{a
    \in (\mathbb{Z}/p^\nu)^\times} \psi \bar{\omega} \bar{\chi}^2(a) E^b_{r',k-l}((a)_\nu)\right) \\
& = & g(\chi) E^b_{r',k-l}(\psi \bar{\omega} \bar{\chi^2})
\end{eqnarray*}

Morevover, the Fourier expansions of $g(\chi)$ and $E^b_{r',k-l}(\psi
\bar{\omega} \bar{\chi}^2)$ are known and are : 
$$g(\chi) = \sum_{n=1}^\infty \chi(n) b_n q^n$$
and
\begin{multline*}E^b_{r',k-l}(\psi \bar{\omega} \bar{\chi}^2) =
\varepsilon^b_{r',k-l}(\psi \bar{\omega} \bar{\chi^2}) + \left(1-b^{k-l-2r'}
  \psi \bar{\omega} \bar{\chi}^2(b) \right) (4\pi y)^{-r'} \\
\times \sum_{n=1}^\infty \left( \sum_{0<d\mid n} \psi \bar{\omega} \bar{\chi}^2
  (d) d^{k-l-2r'-1} \right) W(4\pi n y,k-l-r',-r') q^n \end{multline*}

In this case, we previously explicited the Witthaker polynomial which is : 
$$W(y,k-l-r',-r') = \sum_{i=0}^{r'} (-1)^i {r' \choose i}
\frac{\Gamma(k-l-r')}{\Gamma(k-l-r'-i)} y^{r'-i}$$

So we can write the Fourier expansion of $E^b_{r',k-l}(\psi \bar{\omega}
\bar{\chi}^2)$ under the form : 
\begin{multline*}\varepsilon^b_{r',k-l}(\psi \bar{\omega} \bar{\chi}^2) + \left( 1 -
  b^{k-l-2r'} \psi \bar{\omega} \bar{\chi}^2 (b) \right) \sum_{i=0}^{r'}
(4\pi y)^{-i} (-1)^i {r' \choose i}\\
\times \frac{\Gamma(k-l-r'-i)}{\Gamma(k-l-r')}\sum_{n=1}^\infty n^{r'-i} \left(\sum_{0<d\mid n} \psi
  \bar{\omega} \bar{\chi}^2 (d) d^{k-l-2r'-1} \right) q^n \end{multline*}

so that the Fourier expansion of the product $g(\chi) E^b_{r',k-l}(\psi
\bar{\omega} \bar{\chi}^2)$ is

\begin{multline*}
\sum_{i=0}^{r'} (4\pi y)^{-i} (-1)^i {r' \choose i}
\frac{\Gamma(k-l-r')}{\Gamma(k-l-r'-i)}  \left(1 - b^{k-l-2r'} \psi
  \bar{\omega} \bar{\chi}^2 (b) \right) \\ \times \left(\sum_{n_1 + n_2 = n \atop n_2 > 0} \chi(n_1)
b_{n_1} n_2^{r'-i} \left(\sum_{0<d\mid n_2} \psi \bar{\omega} \bar{\chi}^2
  (d) d^{k-l-2r'-1}\right) + \chi(n) b_n \varepsilon^b_{r',k-l}(\psi
\bar{\omega} \bar{\chi}^2) \right)
\end{multline*}

To end the computation of Fourier coefficients, it is sufficient to remind
that $U$ acts on nearly holomorphic modular forms by
$$\left.\left(\sum_{i=0}^r \omega^{-i} \sum_{n=0}^\infty a_i(n) q^n \right) \right| 
U = \sum_{i=0}^r \omega^{-i} p^i \sum_{n=0}^\infty a_i(np) q^n$$

We are now able to compute the coefficients we previously denoted by
$A(i,n)$.

\begin{multline*}
A(i,n) = p^{2\nu i} \sum_{r'=0}^r {r \choose r'} (-y_p)^{r-r'} (-1)^i
{r' \choose i} \frac{\Gamma(k-l-r')}{\Gamma(k-l-r'-i)}(-1)^{r'}
\sum_{n_1+n_2 = np^{2\nu}} b_{n_1} n_2^{r'-i} \\ \times \frac{1}{\varphi(p^\nu)}
\sum_{\chi \text{ mod }p^\nu} \chi(n_1) \bar{\chi}(y) \left(1-b^{k-l-2r'}
  \psi \bar{\omega} \bar{\chi}^2 (b) \right) \sum_{0<d\mid n_2} \psi
\bar{\omega} \bar{\chi}^2 (d) d^{k-l-2r'-1}
\end{multline*}

The non-zero terms in $A(i,n)$ will necessarily be those with $n_1$ prime
to $p$ (in the contrary case, $\chi(n_1)=0$). Then the relations $n_1 +
n_2=np^{2\nu}$ and $d \mid n_2$ show that $n_2$ and $d$ are also prime to
$p$. Then $n_1,n_2$ and $d$ are p-adic units. As a consequence, we can
write $A(i,n)$ as a linear combination with units coefficients of terms of
the following form : 
\begin{multline*}
B(i,n,n_1,d) = p^{2\nu i} \sum_{r'=0}^r {r \choose r'} (-y_p)^{r-r'}
{r' \choose i}(-1)^i \frac{\Gamma(k-l-r')}{\Gamma(k-l-r'-i)} (-1)^{r'}
n_2^{r'} b_{n_1} \\  \times \frac{1}{\varphi(p^\nu)} \sum_{\chi \text{ mod }p^\nu}
\chi(n_1) \bar{\chi}(y) \bar{\chi}^2(d) d^{-2r'} \left(1-b^{k-l-2r'} \psi
  \bar{\omega} \bar{\chi}^2 (b) \right) \\
= p^{2\nu i} \sum_{r'=0}^r {r \choose r'} (-y_p)^{r-r'}
{r' \choose i}(-1)^i \frac{\Gamma(k-l-r')}{\Gamma(k-l-r'-i)}
(-n_2)^{r'} b_{n_1} \\  \times \frac{1}{\varphi(p^\nu)} \sum_{\chi \text{ mod }p^\nu}
\chi(-n_2) \bar{\chi}(y) \bar{\chi}^2(d) d^{-2r'} \left(1-b^{k-l-2r'} \psi
  \bar{\omega} \bar{\chi}^2 (b) \right)
\end{multline*}

\subsubsection{Use of a p-adic integral}
In this section, we wish to use Iwasawa's isomorphism in the goal of
considering integrals over $\Zp^\times$.

Considering the term $1 - b^{k-l-2r'} \psi \bar{\omega} \bar{\chi}^2 (b)$
with $b, \psi$ and $\omega$ fixed, we can see it as a $\Cp$-analytic
bounded function of the variable $x=\chi x_p^{r'}$. Iwasawa's isomorphism
then says that it is the Mellin transform of a measure $\mu$, evaluated at
the point $x$. This measure then verifies : 
$$1-b^{k-l-2r'}\psi \bar{\omega} \bar{\chi}^2(b) = \int_{\Zp^\times} \chi
x_p^{r'} d\mu$$

An important fact in what follows is that this measure only depends on $b$,
$\psi$ and $\omega$ previously fixed, and so is independant of $r'$. Hence,
the following equality holds :

\begin{equationarray*}{lclr}
B(i,n,n_1,d)& =& \multicolumn{2}{l}{b_{n_1} p^{2\nu i} \sum_{r'=0}^r {r \choose r'} (-y_p)^{r-r'} (-1)^i {r' \choose i} \frac{\Gamma(k-l-r')}{\Gamma(k-l-r'-i)}} \\
& & \multicolumn{2}{r}{\times \frac{1}{\varphi(p^\nu)} \sum_{\chi} \chi\left(\frac{-n_2}{yd^2}\right)
\left(\frac{-n_2}{d^2}\right)^{r'} \int_{\Zp^\times} \chi x_p^{r'} d\mu} \\
& = &\multicolumn{2}{l}{ b_{n_1} p^{2\nu i} \sum_{r'=0}^r {r \choose r'} (-y_p)^{r-r'}
(-1)^i {r' \choose i} \frac{\Gamma(k-l-r')}{\Gamma(k-l-r'-i)}} \\
& & \multicolumn{2}{r}{ \times \int_{\Zp^\times}
\frac{1}{\varphi(p^\nu)} \sum_{\chi}\chi\left(\frac{-n_2 x}{ d^2} \right) \left( \frac{-n_2 x}{y
    d^2}\right)_p^{r'} d\mu(x)} \\
& = &\multicolumn{2}{l}{ b_{n_1} p^{2\nu i} \sum_{r'=0}^r {r \choose r'} (-y_p)^{r-r'}
(-1)^i {r' \choose i} \frac{\Gamma(k-l-r')}{\Gamma(k-l-r'-i)}} \\
& &\multicolumn{2}{r}{\times \int_{x
  \equiv -d^2 y n_2^{-1} \text{ mod } p^\nu}
\left(\frac{-n_2 x}{d^2 } \right)_p^{r'} d\mu(x)}
\end{equationarray*}

\subsubsection{Use of a differential operator}
Now, working with $n,n_1$ and $d$ fixed, we wish to introduce a
differential operator which use could help us to understand the origin of
the hypergeometric factor $(-1)^i {r'
  \choose i} \frac{\Gamma(k-l-r')}{\Gamma(k-l-r'-i)}$.

First, let's write
\begin{eqnarray*}{r' \choose i} & = & \frac{r' \times (r'-1) \dots
    (r'-i+1)}{i!} \\
(-1)^i \frac{\Gamma(k-l-r')}{\Gamma(k-l-r'-i)} & = & (-1)^i (k-l-r'-1)
\times \dots \times (k-l-r'-i) \\
& = & (i+r'+l-k) \times \dots \times (1+r'+l-k)
\end{eqnarray*}

Making the change of variable $z = \frac{- x n_2}{d^2}$, we define the
differential operator $D$ by
$$D = z^{k-l-i} \frac{d}{dz} \frac{1}{z^{k-l-i-1}}\frac{d}{dz}$$

It is then easy to check that
$$D z^{r'} = r' (i+r'+l-k) z^{r'-1}$$
and by induction that
$$D^i z^{r'} = r' \dots (r'-i+1) (i+r'+l-k) \dots (1+r'+l-k) z^{r'-i}$$

Consequently, the following equality holds : 
$$\sum_{r'=0}^r {r \choose r'} (-y_p)^{r-r'} (-1)^i {r' \choose i}
\frac{\Gamma(k-l-r')}{\Gamma(k-l-r'-i)} z^{r'} = \frac{z^i}{i!} D^i\left((z-y)_p^r\right)$$

\subsubsection{Proof of the congruences}

Now, we have all the tools we need to prove the inequality
(\ref{divisib}). Remind that by what we already said, it is sufficient to
prove that the inequality holds for $B(i,n,n_1,d)$.

But, using the tools we just introduced, we have : 
\begin{eqnarray*}
B(i,n,n_1,d) & = &b_{n_1}  p^{2\nu i} \sum_{r'=0}^r {r \choose r'}
(-y_p)^{r-r'} (-1)^i {r' \choose i}
\frac{\Gamma(k-l-r')}{\Gamma(k-l-r'-i)} \int_{x \equiv -d^2 y n_2^{-1}}
z^{r'} d\mu(x) \\
& = &\frac{b_{n_1} p^{2\nu i}}{i!} \int_{x \equiv -d^2 y n_2^{-1} \text{
    mod } p^\nu}
D^i\left((z-y)_p^r\right) z^i d\mu(x) \\
& = & \frac{b_{n_1} p^{2\nu i}}{i!} \int_{\Zp^\times} \delta_y(z)
D^i\left((z-y)_p^r \right) z^i d\mu(x)
\end{eqnarray*}
where $\delta_y$ denotes the characteristic function of the open set $y + (p^\nu)$.

Notice that when $z \equiv y \mod p^\nu$, then  $(z-y)^r \equiv 0
\mod p^{\nu r}$ so that $D^i\left((z-y)^r\right) \equiv 0 \mod
p^{\nu(r-2i)}$.
Thereby the following inequality holds : 
\begin{eqnarray*}
\left|B(i,n,n_1,d)\right|_p& \leq & |b_{n_1}|_p |p|_p^{-2\nu i}\frac{1}{|i!|_p} \left| D^i\left((z-y)^r\right)
  y^i \right|_p \\
& \leq & |g|_p \frac{1}{|r!|_p} p^{\nu(2i-2i-r)} \\
& \leq & C p^{-\nu r}
\end{eqnarray*}

This achieves the proof of the divisibility condition in the case where
$\xi$ is the trivial character.\\
Before the proof, we noticed that it would not be really harder with any
character $\xi$ modulo $N$. Indeed, in the upper proof, it suffices to
replace $\chi$ by $\chi \times \xi$, the only change being in the use of
Iwasawa's isomorphism : we still consider a function of the variable $\chi
x_p^r$, which now depends also on $\xi$. This is not embarassing for the
following since we work with $\xi$ fixed. 

\section{Computation of the integrals}

In this section, we try to compute explicitly the integrals obtained with
the admissible measure defined in the previous section (in what follows,
$\Phi^\alpha$ is the distribution constructed previously, not just the cas
$\xi = 1$), and to link them
with Rankin product. For that, the reasonning is analogous to the one of
Panchishkin in  \cite{pant-invent}, where he treats the case when $g$ is an
Eisenstein series convolution. The result is the one given by theorem B.

We want to compute the integrals
\begin{eqnarray} \label{integrale}\int_Y \chi(y) y_p^r l_{f,\alpha}
  (d\tilde{\Phi}^\alpha) \end{eqnarray}
where $\chi$ is a Dirichlet character modulo $p^\nu$ and $0 \leq r \leq k-l-1$.

Notice that the application of the linear form $l_{f,\alpha}$ to the measure
$\tilde{\Phi}^\alpha$ is only in the goal of having numerical
distributions, $\tilde{\Phi}^\alpha$ being a modular distribution.

\begin{eqnarray*}
\int_Y \chi(y) y_p^r l_{f,\alpha}(\tilde{\Phi}^\alpha) & = & l_{f,\alpha}
\left(\sum_{a \in Y_\nu} \chi(a) \int_{(a)_\nu} y_p^r d\tilde{\Phi}^\alpha \right)
  \\
& = & l_{f,\alpha} \left( \sum_{a \in Y_\nu} \chi(a) \Phi_r^\alpha((a)_\nu)
\right)
\end{eqnarray*}

where
$\Phi_r^\alpha((a)_\nu) = U^{-2\nu} \left[ \pi_{\alpha,1} U^{2\nu}
  \Phi_r((a)_\nu) \right]$ denotes the $\alpha$-primary part of $\Phi_r$,
so that

$$
\Phi_r^\alpha((a)_\nu) = U^{-2\nu} \left[ \pi_{\alpha,1} U^{2\nu} \left( (-1)^r
    \sum_{b' \in Y_\nu} \psi \bar{\omega} (b') g((b'^2a)_\nu) E^b_{r,k-l}((b')_\nu)
    \right) \right] 
$$

Hence, 

\begin{eqnarray*}
\int_Y \chi(y) y_p^r l_{f,\alpha} (d\tilde{\Phi}^\alpha) & = & 
 l_{f,\alpha} \left( \sum_{a \in Y_\nu} \chi(a) \Phi_r^\alpha((a)_\nu)
 \right) \\
& = & l_{f,\alpha} \left( U^{-2\nu} \left[ \pi_{\alpha,1} U^{2\nu} \left(
      (-1)^r \sum_{a,b' \in Y_\nu} \chi(a) \psi \bar{\omega}(b')
      g((b'^2a)_\nu) E^b_{r,k-l}((b')_\nu) \right) \right] \right)
\end{eqnarray*}

Since $\chi(a) \psi \bar{\omega} (b') = \chi (ab'^2) \psi \overline{\omega
  \chi^2} (b')$, we have
$$\int_Y \chi(y) y_p^r l_{f,\alpha} (\tilde{\Phi}^\alpha) = l_{f,\alpha}
\left(U^{2\nu} \left[\pi_{\alpha,1} U^{2\nu} \left( (-1)^r g(\chi)
      E^b_{r,k-l}(\psi \overline{\omega \chi^2}) \right) \right] \right)$$

To simplify notations, from now on we denote $h=(-1)^r g(\chi) E^b_{r,k-l}(\psi
\overline{\omega \chi^2})$.

From the definition of $l_{f,\alpha}$
\begin{eqnarray*}
l_{f,\alpha} \left(U^{-2\nu} \left[ \pi_{\alpha,1} U^{2\nu} h \right]
\right) & = & i_p \left( \frac{\left< f^0,\alpha^{-2\nu} U^{2\nu}(h)
    \right>_{Np}}{\left<f^0,f_0\right>_{Np}} \right) \\
& =& i_p \left( \alpha^{-2\nu} p^{2\nu(k-1)} \frac{\left<V^{2\nu}( f^0),h
    \right>_{Np^{2\nu+1}}}{\left<f^0,f_0\right>_{Np}} \right)
\end{eqnarray*}
where $V$ denotes the operator defined by $V(f)(z) = f(pz) = p^{-k/2} f
\mid_k \begin{pmatrix} p & 0 \\ 0& 1\end{pmatrix}$

Last identity is obtained by watching the action of double cosets :
\begin{eqnarray*} <f^0,U^{2\nu}(h)>_{Np} & = & \left< f^0,h\mid_k
    \left[\Gamma_0(Np^{2\nu +1}) \begin{pmatrix} 1 & 0 \\ 0 & p^{2\nu}
      \end{pmatrix} \Gamma_0(Np) \right] \right>_{Np} \\
 & = &  \left< f^0 \mid_k \left[\Gamma_0(Np) \begin{pmatrix} p^{2\nu} & 0
       \\ 0 & 1 \end{pmatrix} \Gamma_0(Np^{2\nu+1}) \right],h
 \right>_{Np^{2\nu +1}}
\end{eqnarray*}

But it is well-known that  $\left[ \Gamma_0(Np) \begin{pmatrix} p^{2\nu} &
    0 \\ 0 & 1 \end{pmatrix} \Gamma_0(Np^{2\nu+1}) \right] = \left[
  \Gamma_0(Np) \begin{pmatrix} p^{2\nu} & 0 \\ 0 & 1 \end{pmatrix} \right]$

so that
$f^0 \mid_k \left[\Gamma_0(Np) \begin{pmatrix} p^{2\nu} & 0 \\ 0 & 1
  \end{pmatrix} \Gamma_0(Np^{2\nu+1}) \right] = p^{2\nu(k-1)}
f^0(p^{2\nu}z) = p^{2\nu(k/2-1)} f^0 \mid_k
\begin{pmatrix} p^{2\nu} & 0 \\ 0 & 1 \end{pmatrix}$

Now, using the action of the principal involution of level $Np^{2\nu +1}$ :$ \begin{pmatrix} 0 & -1 \\ Np^{2\nu +1} & 0 \end{pmatrix}$
$$\left< V^{2\nu} (f^0),h \right>_{Np^{2\nu+1}} = \left<(V^{2\nu}(f^0))
  \mid_k W_{Np^{2\nu+1}} , h \mid_k W_{Np^{2\nu+1}}\right>_{Np^{2\nu+1}}$$

From the definition of $f^0 = f_0^\rho \mid_k W_{Np}$, we have 
\begin{eqnarray*}
(V^{2\nu}(f^0)) \mid_k W_{Np^{2\nu+1}} & = & p^{-2\nu k/2}f^0 \mid_k
\begin{pmatrix} p^{2\nu} & 0 \\ 0 & 1 \end{pmatrix} \begin{pmatrix} 0 & -1
  \\ Np^{2\nu +1} & 0 \end{pmatrix} \\
& = & p^{-2\nu k/2} f_0^\rho \mid_k W_{Np^{2\nu+1}}W_{Np^{2\nu+1}} \\
& = & (-1)^k p^{-\nu k} f_0^\rho
\end{eqnarray*}

so that we transformed the inner product in
$$\left< V^{2\nu}(f^0),h \right>_{Np^{2\nu+1}} = (-1)^k p^{-\nu k} \left<
  f_0^\rho,h \mid_k W_{Np^{2\nu +1}} \right>_{Np^{2\nu+1}}$$

Going back to the computation of the integral (\ref{integrale}) : 
\begin{equationarray*}{l}
(-1)^{k+r} \int_Y \chi(y) y_p^r l_{f,\alpha} (d\tilde{\Phi}^\alpha)  =  
(-1)^{k+r} l_{f,\alpha} \left( U^{-2\nu} \left[\pi_{\alpha,1} U^{2\nu} (h)
  \right] \right) \\ 
{ =  i_p \left( \alpha^{-2\nu} p^{2\nu(k/2-1)} \frac{\left< f_0^\rho,
      g(\chi) \mid_l W_{Np^{2\nu+1}} \times E^b_{r,k-l} (\psi \bar{\omega}
      \bar{\chi}^2) \mid_{k-l} W_{Np^{2\nu+1}}
    \right>_{Np^{2\nu+1}}}{\left<f^0,f_0\right>} \right)}
\end{equationarray*}

From now on, we denote $\omega \chi^2$ the Dirichlet character of $g(\chi)$ by
$\xi$. It remains to express the numerator as a Rankin product. We already know that 
\begin{multline*}
E_{r,k-l}(\psi \bar{\xi})\mid W_{Np^{2\nu+1}} = 
\frac{(Np^{2\nu+1})^{k-l-2r/2} \Gamma(k-l-r)}{(-2i\pi)^{k-l-2r} (-4\pi y)^r} N_0^{-1}
G((\psi \bar{\xi})_0) \\
\times \sum_{0<t \mid N_1} \mu(t) (\psi \bar{\xi})_0(t) t^{-1}
J_{k-l,N_0}(t^{-1}N_1z,-r,\overline{(\psi \bar{\xi})_0})
\end{multline*}

where $(\psi \bar{\xi})_0$ is the primitive Dirichlet character modulo $N$
associated to $\psi \bar{\xi}$ and $N_1 =
  Np^{2\nu+1}/N_0$ and $G((\psi \bar{\chi})_0)$ is its Gauss sum and
  $J_{k-l,N_0}$ is defined as in \cite{pant-invent}. Since

\begin{equationarray*}{l}
E^b_{r,k-l}(\psi \bar{\xi}) = \sum_{a \in Y_\nu} \psi(a)
\bar{\xi}(a) E_{r,k-l}(a) - b^{k-l-2r} \sum_{a \in Y_\nu} \psi(a) \bar{\xi}(a)
E_{r,k-l}(b^{-1}a)\\
 = (1-b^{k-l-2r} \psi(b) \bar{\xi}(b)) E_{r,k-l}(\psi
\bar{\xi})
\end{equationarray*} it is sufficient to treat the case of $E_{r,k-l}$ to deduce
those of $E^b_{r,k-l}$.

In this purpose, we use a result of Shimura, under the form given in
\cite{hida_rankin} :
\begin{prop}
With previous notations, we have
\begin{multline*}
\mathcal{D}_{Np^{2\nu+1}}(l+r,f_0^\rho,g(\chi)\mid W_{Np^{2\nu+1}}) = \\
C \sum_{0<t|N_1} \mu(t)t^{-1} (\psi \bar{\xi})_0
  (t)\left<f_0^\rho,g(\chi)\mid W_{Np^{2\nu+1}}
    J_{k-l,N_0}(t^{-1}N_1z,-r,\overline{(\psi \bar{\xi}
     )_0})y^{-r}\right>_{Np^{2\nu+1}}
\end{multline*}
with $C = \pi^{-k+2l+3r+1} 2^{2l+2r-1} (Np^{2\nu+1})^{k-l-2r-1} N_0^{-1}
G((\psi \bar{\xi})_0) \frac{\Gamma(k-l-r)}{\Gamma(l+r)\Gamma(r+1)}$
where $G((\psi \bar{\xi})_0)$ is the Gauss sum of $(\psi
\bar{\xi})_0$.

\end{prop}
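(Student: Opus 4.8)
The proposition is the Rankin--Selberg integral representation of the critical values of $\mathcal{D}$, in the classical form going back to Rankin and made arithmetically precise by Shimura in \cite{shimura_special}, here in the normalisation used by Hida \cite{hida_rankin}. The plan is to recall the unfolding that produces it and then to identify the Eisenstein series occurring in it with the $J_{k-l,N_0}$ of \cite{pant-invent}. First I would write down the Rankin lemma: for a cusp form $F$ of weight $k$ and a nearly holomorphic form $G$ of weight $l<k$ on $\Gamma_0(M)$ whose Dirichlet characters multiply to $\eta$, unfolding the fundamental domain against a weight-$(k-l)$, level-$M$, character-$\eta$ Eisenstein series $\mathcal{E}(z,s')$ yields
\[
\mathcal{D}_M(s,F,G) = (\text{explicit }\Gamma\text{- and }\pi\text{-factor})\cdot\langle F,\ \overline{G}\,\mathcal{E}(z,s')\,y^{\kappa}\rangle_M,
\]
where $s'$ is an explicit affine function of $s$ and $y^{\kappa}$ an explicit power of the imaginary part; the Dirichlet $L$-factor $L_M(2s+2-k-l,\psi\omega)$ built into $\mathcal{D}$ is exactly what is needed to absorb the Dirichlet series of $\mathcal{E}$, so that the right-hand side is clean. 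I would then specialise $F=f_0^\rho$ (character $\bar\psi$), $G=g(\chi)\mid_l W_{Np^{2\nu+1}}$ (character $\bar\xi$, with $\xi=\omega\chi^2$), $M=Np^{2\nu+1}$, $\eta=\psi\bar\xi$ and $s=l+r$; for this value of $s$ the argument $s'$ becomes $-r$, and the Eisenstein series $\mathcal{E}(z,-r)$ of weight $k-l$ and character $\psi\bar\xi$ at level $Np^{2\nu+1}$ is --- after the standard normalisation --- the nearly holomorphic form that was written $E_{r,k-l}(\psi\bar\xi)\mid W_{Np^{2\nu+1}}$ immediately before the statement.

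Second, I would feed in the level-raising identity already displayed just before the statement, which expresses $E_{r,k-l}(\psi\bar\xi)\mid W_{Np^{2\nu+1}}$ as an explicit scalar times $\sum_{0<t\mid N_1}\mu(t)(\psi\bar\xi)_0(t)t^{-1}J_{k-l,N_0}(t^{-1}N_1z,-r,\overline{(\psi\bar\xi)_0})$. Here the M\"obius sum is the usual sieve passing from the conductor $N_0$ of $(\psi\bar\xi)_0$ up to the level $N_0N_1=Np^{2\nu+1}$, and the factor $N_0^{-1}G((\psi\bar\xi)_0)$ sitting in the scalar is precisely what appears when one rewrites an Eisenstein series attached to residue classes modulo $N$ in terms of the one attached to the primitive character $(\psi\bar\xi)_0$. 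Substituting this into the unfolding identity, pulling the scalar prefactor out of the Petersson product, and cancelling the explicit $(-4\pi y)^{-r}$ against the $y^{\kappa}$ and one copy of $\Gamma(k-l-r)$ against the one produced by the Rankin lemma, one is left with exactly $C\sum_{0<t\mid N_1}\mu(t)t^{-1}(\psi\bar\xi)_0(t)\langle f_0^\rho,\,g(\chi)\mid W_{Np^{2\nu+1}}\,J_{k-l,N_0}(t^{-1}N_1z,-r,\overline{(\psi\bar\xi)_0})\,y^{-r}\rangle_{Np^{2\nu+1}}$, where $C$ absorbs all the leftover constants.

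The hard part --- the only part requiring real work --- is the bookkeeping of $C$. One has to reconcile the normalisation of $J_{k-l,N_0}$ and of $E_{l,N}(z,s;a,b)$ used in \cite{pant-invent} with the normalisation of Shimura's Eisenstein series in \cite{shimura_special}, and then verify that the product of (i) the $\pi$- and $\Gamma$-factor of the Rankin lemma, (ii) the factors $(-2i\pi)^{-(k-l-2r)}$, the relevant power of $2$, and the relevant power of $Np^{2\nu+1}$ coming from writing $E_{l,N}(z,-r;\cdots)$ as a normalised nearly holomorphic series, and (iii) the factor $N_0^{-1}G((\psi\bar\xi)_0)$ from primitivising the character, multiply out to $C=\pi^{-k+2l+3r+1}2^{2l+2r-1}(Np^{2\nu+1})^{k-l-2r-1}N_0^{-1}G((\psi\bar\xi)_0)\Gamma(k-l-r)/(\Gamma(l+r)\Gamma(r+1))$. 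There is no conceptual obstacle beyond the unfolding itself --- which is why the statement is quoted as Shimura's result ``under the form given in \cite{hida_rankin}'' --- and I would pin down $C$ by first checking the case $N=1$, $\nu=0$ (so $N_0=N_1=1$ and both the M\"obius sum and the Gauss sum are trivial) directly against Shimura's original formula, and then switching the level and the nebentypus back on, which only reintroduce the sieve and the $N_0^{-1}G((\psi\bar\xi)_0)$ factor.
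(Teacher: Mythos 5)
The paper offers no proof of this proposition: it is introduced with the words ``we use a result of Shimura, under the form given in \cite{hida_rankin}'', so there is nothing internal to compare your argument against. Your sketch is the standard Rankin--Selberg derivation that underlies that citation --- unfold $\mathcal{D}_M(s,F,G)$ against a weight-$(k-l)$ Eisenstein series, let the Dirichlet $L$-factor built into $\mathcal{D}$ absorb the Dirichlet series of the Eisenstein series, specialise $s=l+r$ so the Eisenstein parameter becomes $-r$, and pass from the imprimitive character modulo $Np^{2\nu+1}$ to the primitive $(\psi\bar{\xi})_0$ by the M\"obius sieve over $t\mid N_1$ together with the factor $N_0^{-1}G((\psi\bar{\xi})_0)$ --- and this is indeed how \cite{shimura_special} and \cite{hida_rankin} obtain such formulas, and how \cite{pant-invent} normalises $J_{k-l,N_0}$. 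So your route is the right one and is consistent with the sources the paper leans on.

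Two caveats. First, you explicitly defer the only substantive content, namely the verification that all the normalisation factors multiply out to $C=\pi^{-k+2l+3r+1}2^{2l+2r-1}(Np^{2\nu+1})^{k-l-2r-1}N_0^{-1}G((\psi\bar{\xi})_0)\Gamma(k-l-r)/(\Gamma(l+r)\Gamma(r+1))$; as written your argument establishes the identity only up to an unspecified constant, which is exactly the part a reader would want carried out (and which cannot be checked against the paper, since the paper does not derive it either). Second, your proposed test case is internally inconsistent: with $N=1$ and $\nu=0$ the level is $Np^{2\nu+1}=p$, so $N_0N_1=p$ and one cannot have $N_0=N_1=1$; the M\"obius sum over $t\mid N_1$ survives even in this degenerate situation, so the constant cannot be pinned down by making both the sieve and the Gauss sum trivial. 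A sounder calibration is to match the formula term by term against the corresponding identity in \cite{hida_rankin} at general level, which is presumably what the author intends by quoting it in that form.
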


Going back to the inner product appearing in the computation of the
integral :
\begin{multline*}
\left<f_0^\rho,g(\chi) \mid_l W_{Np^{2\nu+1}} E^b_{r,k-l}(\psi \bar{\xi})
  \mid W_{Np^{2\nu+1}} \right> = \\
(1-b^{k-l-2r} \psi \bar{\xi}(b))\frac{(Np^{2\nu+1})^{\frac{k-l-2r}{2}} \Gamma(k-l-r)}{(-2i\pi)^{k-l-2r}
  (-4\pi)^r} N_0^{-1} G((\psi \bar{\xi})_0) \\
\times \sum_{0<t\mid N_1} \mu(t) (\psi \bar{\xi})_0(t) t^{-1}
\left<f_0^\rho,g(\chi)\mid W_{Np^{2\nu+1}} J_{k-l,N_0}(t^{-1} N_1 z,
  -r,\psi \bar{\xi}) y^{-r} \right>_{Np^{2\nu+1}} 
\end{multline*}

Using Shimura's formula, this is equal to :
$$
\left(1-b^{k-l-2r} \psi \bar{\omega}(b) \right)\frac{(Np^{2\nu+1})^\frac{k-l-2r}{2} \Gamma(k-l-r)}{(-2i\pi)^{k-l-2r} (-4
  \pi)^r} N_0^{-1} G((\psi \bar{\xi})_0)
  \mathcal{D}_{Np^{2\nu+1}}(l+r,f_0^\rho,g(\chi)\mid_l W_{Np^{2\nu+1}})
  C^{-1}$$

Thus the integral (\ref{integrale}) is equal to :
\begin{multline*}
(-1)^{k+r} \int_Y \chi(y) y_p^r d\tilde{\Phi}^\alpha(y) = \alpha^{-2\nu}
p^{2\nu(k/2-1)}(1-b^{k-l-2r} \psi \bar{\xi}(b)) \frac{\mathcal{D}_{Np^{2\nu+1}}(l+r,f^\rho_0,g(\chi)\mid_l
  W_{Np^{2\nu+1}})}{\left< f^0,f_0\right>_{Np}} \\
\times \frac{(Np^{2\nu+1})^{\frac{k-l-2r}{2}}
  \Gamma(k-l-r)}{(-2i\pi)^{k-l-2r} (-4\pi)^r} N_0^{-1} G((\psi
\bar{\xi})_0) \pi^{k-2l-3r-1} 2^{1-2l-4r} N^{1+2r+l-k} \\
\times N_0 G((\psi \bar{\xi})_0)^{-1} p^{(1+2r+l-k)(2\nu+1)} \frac{\Gamma(l+r)
  \Gamma(r+1)}{\Gamma(k-l-r)} 
\end{multline*}

Grouping similar terms, we  have
\begin{multline*}
(-1)^{k+r} l_{f,\alpha} \left(\int_Y \chi(y) y_p^r d\tilde{\Phi}^\alpha(y)\right) = \alpha^{-2\nu}
(1-b^{k-l-2r} \psi \bar{\chi}(b)) \frac{\mathcal{D}_{Np^{2\nu+1}}(l+r,f_0^\rho,g(\chi) \mid_l
  W_{Np^{2\nu+1}})}{\left<f^0,f_0\right>_{Np}} \\
\times \pi^{-l-2r-1} 2^{1-k-l-2r} N^{1-(k-l-2r)/2}p^{(2\nu+1)(1-(k-l-2r)/2)} (-1)^r i^{k-l-1}
\Gamma(l+r) \Gamma(r+1)
\end{multline*}

In order to conclude the proof of the theorem, let us use  the unicity
property of admissible measures (Thm \ref{unicite-admissible}). This proves that the admissible measure we constructed is the only admissible
measure taking those values in the critical strip $\{l,l+1, \dots,k-1\}$.

\vspace{20pt}

\noindent\textbf{Acknowledgements:} I thank Alexei Panshichkin for having
suggested me this work. I also thank Fran\c cois Brunault for his careful
reading and helpful comments.

\begin{bibdiv}
\begin{biblist}
\bib{pant-lnm}{book}{
   author={Courtieu, Michel},
   author={Panchishkin, Alexei},
   title={Non-Archimedean $L$-functions and arithmetical Siegel modular
   forms},
   series={Lecture Notes in Mathematics},
   volume={1471},
   edition={2},
   publisher={Springer-Verlag},
   place={Berlin},
   date={2004},
   pages={viii+196},
   isbn={3-540-40729-4},
}

\bib{gorsse}{thesis}{
   author={Gorsse, Bertrand},
   title={Mesures p-adiques associ\'ees au carr\'e sym\'etrique},
   date={2006}, 
   place={Universit\'e Joseph Fourier, Grenoble},
}

\bib{hida}{book}{
   author={Hida, Haruzo},
   title={Elementary theory of $L$-functions and Eisenstein series},
   series={London Mathematical Society Student Texts},
   volume={26},
   publisher={Cambridge University Press},
   place={Cambridge},
   date={1993},
   pages={xii+386},
   isbn={0-521-43411-4},
   isbn={0-521-43569-2},
}

\bib{hida_rankin}{article}{
   author={Hida, Haruzo},
   title={A $p$-adic measure attached to the zeta functions associated with
   two elliptic modular forms. I},
   journal={Invent. Math.},
   volume={79},
   date={1985},
   number={1},
   pages={159--195},
   issn={0020-9910},
}

\bib{koblitz}{book}{
   author={Koblitz, Neal},
   title={$p$-adic numbers, $p$-adic analysis, and zeta-functions},
   note={Graduate Texts in Mathematics, Vol. 58},
   publisher={Springer-Verlag},
   place={New York},
   date={1977},
   pages={x+122},
   isbn={0-387-90274-0},
}

\bib{iwasawa}{book}{
   author={Iwasawa, Kenkichi},
   title={Lectures on $p$-adic $L$-functions},
   note={Annals of Mathematics Studies, No. 74},
   publisher={Princeton University Press},
   place={Princeton, N.J.},
   date={1972},
   pages={vii+106},
}

\bib{lang}{book}{
   author={Lang, Serge},
   title={Introduction to modular forms},
   note={Grundlehren der mathematischen Wissenschaften, No. 222},
   publisher={Springer-Verlag},
   place={Berlin},
   date={1976},
   pages={ix+261},
}

\bib{mazur}{article}{
    author={Mazur, Barry},
    title={Analyse p-adique},
    note={Rapport Bourbaki non publie},
}

\bib{miyake}{book}{
   author={Miyake, Toshitsune},
   title={Modular forms},
   note={Translated from the Japanese by Yoshitaka Maeda},
   publisher={Springer-Verlag},
   place={Berlin},
   date={1989},
   pages={x+335},
   isbn={3-540-50268-8},
}

\bib{my}{article}{
   author={My, Vinh Quang},
   title={Convolutions $p$-adiques non born\'ees de formes modulaires de
   Hilbert},
   language={French, with English and French summaries},
   journal={C. R. Acad. Sci. Paris S\'er. I Math.},
   volume={315},
   date={1992},
   number={11},
   pages={1121--1124},
   issn={0764-4442},
}

\bib{pant-bordeaux}{article}{
   author={Panchishkin, Alexei},
   title={Sur une condition suffisante pour l'existence de mesures
   $p$-adiques admissibles},
   language={French, with English and French summaries},
   journal={J. Th\'eor. Nombres Bordeaux},
   volume={15},
   date={2003},
   number={3},
   pages={805--829},
   issn={1246-7405},
}

\bib{pant-invent}{article}{
   author={Panchishkin, A. A.},
   title={Two variable $p$-adic $L$ functions attached to eigenfamilies of
   positive slope},
   journal={Invent. Math.},
   volume={154},
   date={2003},
   number={3},
   pages={551--615},
   issn={0020-9910},
}

\bib{pant}{book}{
   author={Panchishkin, Alexey A.},
   title={Non-Archimedean $L$-functions of Siegel and Hilbert modular forms},
   series={Lecture Notes in Mathematics},
   volume={1471},
   publisher={Springer-Verlag},
   place={Berlin},
   date={1991},
   pages={vi+157},
   isbn={3-540-54137-3},
}

\bib{pant-duke}{article}{
   author={Panchishkin, A. A.},
   title={A functional equation of the non-Archimedian Rankin convolution},
   journal={Duke Math. J.},
   volume={54},
   date={1987},
   number={1},
   pages={77--89},
   issn={0012-7094},
}

\bib{pant-new}{article}{
   author={Panchishkin, Alexei},
   title={A new method of constructing $p$-adic $L$-functions associated
   with modular forms},
   note={Dedicated to Yuri I. Manin on the occasion of his 65th birthday},
   journal={Mosc. Math. J.},
   volume={2},
   date={2002},
   number={2},
   pages={313--328},
   issn={1609-3321},
}

\bib{serre}{article}{
   author={Serre, Jean-Pierre},
   title={Formes modulaires et fonctions z\^eta $p$-adiques},
   language={French},
   conference={
      title={Modular functions of one variable, III (Proc. Internat. Summer
      School, Univ. Antwerp, 1972)},
   },
   book={
      publisher={Springer},
      place={Berlin},
   },
   date={1973},
   pages={191--268. Lecture Notes in Math., Vol. 350},
}

\bib{shimura}{book}{
   author={Shimura, Goro},
   title={Introduction to the arithmetic theory of automorphic functions},
   note={Kano Memorial Lectures, No. 1},
   publisher={Publications of the Mathematical Society of Japan, No. 11.
   Iwanami Shoten, Publishers, Tokyo},
   date={1971},
   pages={xiv+267},
}

\bib{shimura_special}{article}{
   author={Shimura, Goro},
   title={The special values of the zeta functions associated with cusp
   forms},
   journal={Comm. Pure Appl. Math.},
   volume={29},
   date={1976},
   number={6},
   pages={783--804},
   issn={0010-3640},
}

\bib{visik}{article}{
   author={Vi{\v{s}}ik, M. M.},
   title={Nonarchimedean measures associated with Dirichlet series},
   language={Russian},
   journal={Mat. Sb. (N.S.)},
   volume={99(141)},
   date={1976},
   number={2},
   pages={248--260, 296},
}

\end{biblist}
\end{bibdiv}

 \end{document}